\newtheorem{lemma}{Lemma}[section]
\newtheorem{corollary}[lemma]{Corollary}
\newtheorem{theorem}[lemma]{Theorem}
\newtheorem{proposition}[lemma]{Proposition}
\newtheorem{condition}[lemma]{Condition}
\theoremstyle{definition}
\newtheorem{remark}[lemma]{Remark}
\newtheorem{definition}[lemma]{Definition}
\newtheorem{example}[lemma]{Example}
\newtheorem{question}[lemma]{Question}
\DeclareMathOperator{\Mod}{Mod}
\DeclareMathOperator{\modd}{mod}
\DeclareMathOperator{\End}{End}
\DeclareMathOperator{\add}{add}
\DeclareMathOperator{\Hom}{Hom}
\DeclareMathOperator{\Ext}{Ext}
\DeclareMathOperator{\ind}{ind}
\DeclareMathOperator{\rad}{rad}
\DeclareMathOperator{\mcm}{\mathcal{M}}
\DeclareMathOperator{\mcc}{\mathcal{C}}
\DeclareMathOperator{\dep}{dp}
\DeclareMathOperator{\im}{Im}
\DeclareMathOperator{\Coker}{Coker}
\newtheorem*{theorem a*}{Theorem A}
\newtheorem*{theorem b*}{Condition B}
\newtheorem*{theorem c*}{Theorem C}
\newtheorem*{theorem d*}{Theorem D}
\newtheorem*{theorem e*}{Theorem E}
\newtheorem*{theorem f*}{Theorem F}
\definecolor{blac}{RGB}{0,0,1}
\begin{document}

\title{The radical of functorially finite subcategories}

\author{Raziyeh Diyanatnezhad}
\address{Department of Pure Mathematics\\
Faculty of Mathematics and Statistics\\
University of Isfahan\\
P.O. Box: 81746-73441, Isfahan, Iran}
\email{r.diyanat@sci.ui.ac.ir}

\author{Alireza Nasr-Isfahani}
\address{Department of Pure Mathematics\\
Faculty of Mathematics and Statistics\\
University of Isfahan\\
P.O. Box: 81746-73441, Isfahan, Iran\\ and School of Mathematics, Institute for Research in Fundamental Sciences (IPM), P.O. Box: 19395-5746, Tehran, Iran}
\email{nasr$_{-}$a@sci.ui.ac.ir / nasr@ipm.ir}

\subjclass[2010]{{16G10}, {16E20}, {18F30}, {18A25}}

\keywords{irreducible morphism, Jacobson radical, functorially finite subcategory, $n$-cluster tilting subcategory, Gorenstein projective modules}

\begin{abstract}
Let $\Lambda$ be an artin algebra and $\mcc$ be a functorially finite subcategory of $\modd\Lambda$ which contains $\Lambda$ or $D\Lambda$. We use the concept of the infinite radical of $\mcc$ and show that $\mcc$ has an additive generator if and only if $\rad^\infty_{\mcc}$ vanishes. In this case we describe the morphisms in powers of the radical of $\mcc$ in terms of its irreducible morphisms. Moreover, under a mild assumption, we prove that $\mcc$ is of finite representation type if and only if any family of monomorphisms (epimorphisms) between indecomposable objects in $\mathcal{C}$ is noetherian (conoetherian). Also, by using injective envelopes, projective covers, left $\mcc$-approximations and right $\mcc$-approximations of simple $\Lambda$-modules, we give other criteria  to describe whether $\mcc$ is of finite representation type. In addition, we give a nilpotency index of the radical of $\mcc$ which is independent from the maximal length of indecomposable $\Lambda$-modules in $\mcc$.
\end{abstract}

\maketitle

%%%%%%%%%%%%%%%%%%%%%%%%%%%%%%%%%%%%%%
\section{Introduction}
Let $\Lambda$ be an artin algebra and $\modd\Lambda$ be the category of finitely generated left $\Lambda$-modules. We denote the {\em (Jacobson) radical} of $\modd\Lambda$ by $\rad_\Lambda$. We remind that $\rad_\Lambda(X,Y)$ is the set of all non-isomorphisms from $X$ to $Y$, for two indecomposable $\Lambda$-modules $X$ and $Y$. The powers of $\rad_\Lambda(X,Y)$ are defined in the natural way. We denote by $\rad_\Lambda^\infty(X,Y)$ the intersection of all powers $\rad^i_\Lambda(X,Y)$ of $\rad_\Lambda(X,Y)$ with $i\geq1$.
A morphism $g:X\longrightarrow Y$ in $\modd\Lambda$ is called {\em irreducible} if it is neither a section nor a retraction and for every factorization $g=hf$, either $h$ is a retraction or $f$ is a section. There is a close relationship between irreducible morphisms in $\modd\Lambda$ and the radical of $\modd\Lambda$. Bautista proved that a morphism $f:X\longrightarrow Y$ between two indecomposable $\Lambda$-modules $X$ and $Y$ is irreducible if and only if $f\in\rad_\Lambda(X,Y)\backslash\rad_\Lambda^2(X,Y)$ \cite{B}.
Also a description of morphisms in $\rad_\Lambda^d(X,Y)$ with two indecomposable $\Lambda$-modules $X$ and $Y$ and integer $d\geq2$ was given in \cite[ V. Proposition 7.4]{ARS}.
The radical of the module category is one of the fundamental tools for studying representation theory of artin algebras.
For example, by using the radical of the module category of artin algebras, a classification of artin algebras based on the representation type was given. Indeed, it is well known that an artin algebra $\Lambda$ is of finite representation type if and only if the radical of $\modd\Lambda$ vanishes \cite{KS,Sk}.
More precisely, by the Auslander's works, if $\Lambda$ is of finite representation type then $\rad_\Lambda^\infty=0$ \cite{AuL,Au2}. The other side of this result is a consequence of the Harada--Sai lemma.
Coelho et al. deepened the Auslander's result by showing that $(\rad_\Lambda^\infty)^2=0$ if and only if $\Lambda$ is of finite representation type \cite{Co}. Then Chaio and Liu strengthened these two results. They showed that $\Lambda$ is of finite representation type if and only if the projective covers (resp. injective envelopes) of the
simple $\Lambda$-modules do not lie in the infinite radical or the compositions of projective covers and injective envelopes of the
simple $\Lambda$-modules do not lie in the square of the infinite radical \cite{Ch-Li}. By the above facts, if $\Lambda$ is of finite representation type, then there exists an integer $d\geq1$ such that $\rad_\Lambda^d=0$. This integer is called the {\em nilpotency index} of the radical of $\modd\Lambda$. By Harada--Sai lemma, $2^b-1$ is the nilpotency index of $\rad_\Lambda$, where $b$ is the maximal length of indecomposable $\Lambda$-modules.
Chaio and Liu in \cite{Ch-Li} defined the {\em depth} of a morphism in $\modd\Lambda$ and showed how one can compute the nilpotency index of $\rad_\Lambda$ in terms of the depth of the compositions of projective covers and injective envelopes of simple $\Lambda$-modules.

The notion of functorially finite subcategories of $\modd\Lambda$ was introduced by Auslander and Smal{\o} in \cite{AS,AS1}. In fact, these subcategories appeared in connection with studying the problem of which subcategories of $\modd\Lambda$ have almost split sequences. Functorially finite subcategories play an important role in the representation theory of artin algebras. The contravariantly finite resolving subcategories, the subcategory of finitely generated Gorenstein projective modules and $n$-cluster tilting subcategories are some certain kind of functorially finite subcategories which have been already studied. For example, Auslander and Reiten showed that the notion of a contravariantly finite resolving subcategory has a close relationship with the tilting theory. They classified contravariantly finite resolving subcategories over an artin algebra of finite global dimension in terms of
cotilting modules \cite{AR0}. Let $n$ be a positive integer. $n$-cluster tilting subcategories are one of the most important concepts of the higher Auslander--Reiten theory that was introduced by Iyama in 2004 \cite{I2, I1} and was further studied and developed in several papers \cite{I3, I4}. These subcategories were introduced during looking for a higher dimensional version of the Auslander's correspondence.

We recall that the subcategory $\mcc$ of $\modd\Lambda$ is called of finite representation type if the number of isomorphism classes of indecomposable objects in $\mcc$ is finite. The representation type of functorially finite subcategories of $\modd\Lambda$ is an interesting topic for studding. For instance, the finiteness of representation type of an $n$-cluster tilting subcategory of $\modd\Lambda$ for $n\geq 2$ is one of the main questions in the higher Auslander--Reiten theory that was posed by Iyama \cite{I3}.
This question is still unanswered, but so far many attempts have been made to answer it \cite{DN, EN, EN0, FN}.
Also, the representation type of the subcategory of finitely generated Gorenstein projective modules of $\modd\Lambda$ has been already studied. We remind that $\Lambda$ is called of finite $\mathrm{CM}$-type if the number of isomorphism classes of the indecomposable
finitely generated Gorenstein projective $\Lambda$-modules is finite. There are some results by Chen, Beligiannis and others showing when a Gorenstein artin algebra is of finite $\mathrm{CM}$-type \cite{C, Be, FN}.

In this paper we try to find criteria that can use in evaluating the representation type of a functorially finite subcategory of $\modd\Lambda$. In fact, we give equivalent conditions to the finiteness of the representation type of the functorially finite subcategory $\mcc$ of $\modd\Lambda$, by using the radical of $\mcc$. More precisely, we first use the concepts of the noetherian and the conoetherian family of morphisms in $\mcc$ and show that if any family of morphisms between indecomposable objects in a functorially finite subcategory $\mathcal{C}$ of $\modd\Lambda$ is noetherian (conoetherian), then $\mathcal{C}$ is of finite representation type (see Proposition \ref{pro6}). Then, we prove the following theorem.

\begin{theorem a*}$($Theorem \ref{main}$)$\label{A}
Let $\mcc$ be a functorially finite subcategory of $\modd\Lambda$.
\begin{itemize}
\item[(a)]
 If $\mcc$ contains $D\Lambda$, then the following are equivalent.
\begin{itemize}
\item[(i)]
There exists $t\in\mathbb{N}$ such that for any $X\in\ind\mcc$, $\rad^t_{\mcc}(X,-)=0$.
\item[(ii)]
For any $X\in\ind\mcc$, there exists $t\in\mathbb{N}$ such that $\rad^t_{\mcc}(X,-)=0$.\item[(iii)]$\mcc$ is of finite representation type.
\item[(iv)]
$\rad^\infty_{\mcc}=0$.
\end{itemize}
\item[(b)]
If $\mcc$ contains $\Lambda$, then the following are equivalent.
\begin{itemize}
\item[(i)]
There exists $t\in\mathbb{N}$ such that for any $X\in\ind\mcc$, $\rad^t_{\mcc}(-,X)=0$.
\item[(ii)]
For any $X\in\ind\mcc$, there exists $t\in\mathbb{N}$ such that $\rad^t_{\mcc}(-,X)=0$.
\item[(iii)]
$\mcc$ is of finite representation type.
\item[$(\mathrm{iv})$]$\rad^\infty_{\mcc}=0$.
\end{itemize}
\end{itemize}
\end{theorem a*}
By the Auslander's works, we know that $\modd\Lambda$ is of finite representation type if and only if every family of monomorphisms (resp. epimorphisms) between indecomposable objects in $\modd\Lambda$ is noetherian (resp. conoetherian) (for example see \cite[Theorem 3.1]{Au2} and \cite[Theorem A]{AuL}). It is natural to ask when these results are satisfied for a functorially finite subcategory $\mathcal{C}$ of $\modd\Lambda$. We define the {\em strongly minimal element} concept (see Definition \ref{sm}) and show that these results are satisfied for a functorially finite subcategory $\mathcal{C}$ of $\modd\Lambda$ which contains $\Lambda$ or $D\Lambda$ provided that the following condition is satisfied.
\begin{theorem b*}$($Condition \ref{c1}$)$\label{B}
For any $\mcc$-module $F$, $X\in\mcc$ and $0\neq x\in F(X)$, there is a morphism $f:X\longrightarrow X^\prime$ in $\mcc$ such that $F(f)(x)$ is a strongly minimal element in $F(X^\prime)$.
\end{theorem b*}
Indeed, we prove the following theorem.
\begin{theorem c*}$($Theorem \ref{th11}$)$\label{C}
Let $\mcc$ be a functorially finite subcategory of $\modd\Lambda$ which contains $\Lambda$ or $D\Lambda$. Then $\mcc$ is of finite representation type if and only if the following conditions hold.
\begin{itemize}
\item[(i)]
Condition B is satisfied.
\item[(ii)]
Any family of monomorphisms between indecomposable objects in $\mathcal{C}$ is noetherian.
\end{itemize}
\end{theorem c*}
Note that when the category $\mcc$ in the above theorem is closed under the image of morphisms, Condition B is automatically satisfied. Therefore, the theorem reduce to this result that any family of monomorphisms (resp. epimorphisms) between indecomposable objects in $\mathcal{C}$ is noetherian (resp. conoetherian) if and only if $\mathcal{C}$ is of finite representation type (see Corollary \ref{cor2}).

When we assume that a functorially finite subcategory of $\modd\Lambda$ which contains $\Lambda$ or $D\Lambda$ is of finite representation type, we can describe its morphisms in terms of its irreducible morphisms.

\begin{theorem d*}$($Theorem \ref{sumirr}$)$\label{D}
Let $\mcc$ be a functorially finite subcategory of $\modd\Lambda$ which contains $\Lambda$ or $D\Lambda$ and $f\in\rad_{\mcc}( A,B)$ be a nonzero morphism with $A,B\in\ind\mcc$. If $\mcc$ is of finite representation type, then $f$ is a sum of compositions of irreducible morphisms in $\mcc$ between indecomposable $\Lambda$-modules.
\end{theorem d*}
In the following theorem, we use some special morphisms to give another criterion for recognizing the finiteness of the representation type of a functorially finite subcategory $\mcc$ of $\modd\Lambda$. Let $\iota_S$ be the injective envelope of $S$, $\pi_S$ be the projective cover $S$, $l_S$ be the left $\mcc$-approximation of $S$ and $r_S$ be the right $\mcc$-approximation of $S$, for each simple $\Lambda$-module $S$. We refer the readers to Definition \ref{def4.1} for the definition of the depth of a morphism.
\begin{theorem e*}$($Theorem \ref{dp1}$)$\label{E}
Let $\mcc$ be a functorially finite subcategory of $\modd\Lambda$.
\begin{itemize}
\item[(a)]
If $\mcc$ contains $\Lambda$, then the following are equivalent.
\begin{itemize}
\item[(i)]
$\mcc$ is of finite representation type.
\item[(ii)]
The depth of the composition $l_S\pi_S$ in $\mcc$ is finite for every simple $\Lambda$-module $S$.\item[(iii)]
For any indecomposable projective $\Lambda$-module $P$, $\rad^\infty_{\mcc}(P,-)=0$.
\end{itemize}
\item[(b)]
If $\mcc$ contains $D\Lambda$, then the following are equivalent.
\begin{itemize}
\item[(i)]
$\mcc$ is of finite representation type.
\item[(ii)]
The depth of the composition $\iota_Sr_S$ in $\mcc$ is finite for every simple $\Lambda$-module $S$.
\item[(iii)]
For any indecomposable injective $\Lambda$-module $I$, $\rad^\infty_{\mcc}(-,I)=0$.
\end{itemize}
\end{itemize}
\end{theorem e*}
By using the depth of the compositions of the projective covers and the injective envelopes of simple $\Lambda$-modules, we find another nilpotency index of the radical of $\mcc$ which is independent from the maximal length of indecomposable $\Lambda$-modules in $\mcc$. More precisely, we assume $\theta_S\coloneqq\iota_S\pi_S$, for every simple $\Lambda$-module $S$ and prove the following theorem.
\begin{theorem f*}$($Theorem \ref{dp2}$)$\label{F}
Let $\mcc$ be a functorially finite subcategory of $\modd\Lambda$ which contains $\Lambda$ and $D\Lambda$. Then $\mcc$ is of finite representation type if and only if the depth of $\theta_S$ in $\mcc$ is finite, for every simple $\Lambda$-module $S$. Moreover, in this case, if $m$ is the maximal depth of the morphisms $\theta_S$ in $\mcc$ with $S$ ranging over all simple $\Lambda$-modules, then $m+1$ is a nilpotency index of $\rad_{\mcc}$.
\end{theorem f*}
Finally, we show that $n$-cluster tilting subcategories and the contravariantly finite resolving subcategories satisfy the above theorems. Also, when $\Lambda$ is virtually Gorenstein, the subcategory of finitely generated Gorenstein projective modules of $\modd\Lambda$ is another nice class of the functorially finite subcategories which satisfy the conditions of our main results.

The paper is organized as follows. In section 2, we recall some fundamental concepts and results that we need throughout this paper. Section 3 is devoted to the proof of theorems A, C and D. We prove theorems E and F in section 4. Finally, in section 5, we give some important examples of the functorially finite subcategories of $\modd\Lambda$ which satisfy the conditions of our main results.
\subsection{Notation}
Throughout this paper, we assume that $R$ is a commutative artinian ring and $\Lambda$ is an artin $R$-algebra.
We denote the category of left $\Lambda$-modules by $\Mod\Lambda$ and the category of finitely
generated left $\Lambda$-modules by $\modd\Lambda$. Also, we denote by $\mathrm{Proj}(\Lambda)$ (resp. $\mathrm{Inj}(\Lambda)$) and $\mathrm{proj}(\Lambda)$ (resp. $\mathrm{inj}(\Lambda)$) the full subcategories of $\Mod\Lambda$ consisting of projective (resp. injective) objects and finitely generated projective (resp. injective) objects, respectively. Let $J_R$ be the Jacobson radical of $R$. We denote by $D$ the duality $\Hom_R(-,I)$, where $I$ is the injective envelope of the $R$-module $R/J_R$. For a category $\mcc$, we denote by $\ind\mcc$ the full subcategory of $\mcc$ consisting of indecomposable objects and denote by $\mathrm{add}(\mcc)$ the full subcategory of $\mcc$ whose objects are direct summands of finite direct sums of objects in $\mcc$. We suppose that all subcategories of $\modd\Lambda$ are full and additive.
%%%%%%%%%%%%%%%%%%%%%%%%%%%%%%%%%%%%%%
\section{Preliminaries}
In this section, we collect some basic definitions and facts that we need throughout this paper. First, we recall some notions and facts from the Auslander--Reiten theory. Next, we recall the concept of the radical of an additive category and some related results.
\begin{definition}$($\cite[Section 2]{AR4}$)$Let $\mathcal{C}$ be an arbitrary category.\begin{itemize}\item[(1)]A morphism $g: X\longrightarrow Y$ in $\mathcal{C}$ is said to be {\em right minimal} if any morphism $h: X\longrightarrow X$ such that $gh=g$ is an isomorphism.\item[(2)]A morphism $g: X\longrightarrow Y$ in $\mathcal{C}$ is said to be {\em right almost split} if $g$ is not a retraction and given any morphism $f:Z\longrightarrow Y$ which is not a retraction, there is a morphism $h:Z\longrightarrow X$ such that $gh=f$.\item[(3)]A morphism $g:X\longrightarrow Y$ in $\mathcal{C}$ is said to be {\em minimal right almost split} if it is both right minimal and right almost split.\item[(4)]A morphism $g:X\longrightarrow Y$ in $\mathcal{C}$ is said to be {\em irreducible} if\begin{itemize}\item[$(a)$]$g$ is neither a retraction nor a section and\item[$(b)$]given a commutative diagram\begin{center}\scalebox{.75}{\begin{tikzpicture}\node (X1) at (0,0) {$X$};\node (X2) at (4,0) {$Y$};\node (X3) at (2,-2) {$Z$};\draw [->,thick] (X1) -- (X2) node [midway,above] {$g$};\draw [->,thick] (X1) -- (X3) node [midway,left] {$f\,\, $};\draw [->,thick] (X3) -- (X2) node [midway,right] {$h$};\end{tikzpicture}}\end{center}either $h$ is a retraction or $f$ is a section.\end{itemize}\end{itemize}\end{definition}
Dually, the concepts of {\em left minimal}, {\em left almost split} and {\em minimal left almost split} morphisms are defined \cite{AR4}.

The category $\mcc$ has (minimal) right almost split morphisms, if for any indecomposable object $Y$ in $\mcc$ there is a (minimal) right almost split morphism $g:X\longrightarrow Y$ in $\mcc$. Dually, $\mcc$ has (minimal) left almost split morphisms, if for any indecomposable object $X$ in $\mcc$ there is a (minimal) left almost split morphism $f:X\longrightarrow Y$ in $\mcc$.

We recall that $\modd\Lambda$ is a Krull--Schmidt category \cite[II. Theorem 2.2]{ARS}. Let $\mcc$ is a subcategory of $\modd\Lambda$ which is closed under direct summands. It is obvious that $\mcc$ also is a Krull--Schmidt category.

A morphism $e:A\longrightarrow A$ in the category $\mathcal{C}$ is called {\em idempotent} if $e^2=e$ and $\mathcal{C}$ is called an {\em idempotent complete} category if for every idempotent $e:A\longrightarrow A$ in $\mathcal{C}$ there exist an object $B$ and morphisms $r:A\longrightarrow B$ and $s:B\longrightarrow A$ such that $sr=e$ and $rs=1_B$ \cite[Page 61]{F}. It is obvious that if $\mcc$ be a subcategory of $\modd\Lambda$ which is closed under isomorphisms and direct summands, then $\mcc$ is idempotent complete. Indeed, if $e : A\to A$ is an idempotent morphism in $\mcc$, then there exist a $\Lambda$-module $B$ and morphisms $r : A \to B$ and $s : B \to A$ such that $sr = e$ and $rs = 1_B$, since $\modd\Lambda$ is idempotent complete. Therefore, the morphism $s$ is a section and we have $A\cong B\oplus\Coker(s)$. Since $A \in\mcc$ and $\mcc$ is closed under isomorphisms and direct summands, then also $B\in\mcc$. Thus, $\mcc$ is idempotent complete.

We recall that an additive category $\mathcal{C}$ is said to be a \textit{variety} if it is essentially small idempotent complete \cite[Page 188]{Au1}.

Auslander and Reiten illustrated the connection between irreducible morphisms and minimal right and left almost split morphisms in \cite{AR4}.
\begin{theorem}$($\cite[Theorem 2.4]{AR4}$)$\label{irr-as}
Let $\mathcal{C}$ be an idempotent complete additive category and $C$ be an indecomposable object in $\mathcal{C}$.
\begin{itemize}
\item[$(a)$]
Assume that there is some $g^{\prime\prime}:B^{\prime\prime}\longrightarrow C$ which is a minimal right almost split morphism. Then $g:B\longrightarrow C$ is irreducible if and only if $B$ is not zero and there is some morphism $g^\prime:B^\prime\longrightarrow C$ such that the induced morphism $(g, g^\prime): B \coprod B'\longrightarrow C$ is minimal right almost split.
\item[(b)]
Assume that there is some $f^{\prime\prime}: C\longrightarrow D^{\prime\prime}$ which is a minimal left almost split morphism. Then $f: C\longrightarrow D$ is irreducible if and only if $D$ is not zero and there is some morphism $f': C\longrightarrow D'$ such that the induced morphism $\left[\begin{array}{ll}f\\f^\prime\end{array}\right]:C\longrightarrow D\prod D'$ is
minimal left almost split.
\end{itemize}
\end{theorem}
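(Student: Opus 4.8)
The plan is to prove part $(a)$ directly and then obtain part $(b)$ for free by applying $(a)$ in the opposite category $\mathcal{C}^{\mathrm{op}}$, which is again idempotent complete, in which indecomposables are the same, and in which minimal right almost split morphisms, retractions and coproducts correspond to minimal left almost split morphisms, sections and products in $\mathcal{C}$, while irreducibility is a self-dual notion. So the whole content is in $(a)$.

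For the forward implication of $(a)$, I would start from an irreducible $g\colon B\longrightarrow C$. Since an irreducible morphism is not a retraction and $g''\colon B''\longrightarrow C$ is right almost split, $g$ factors as $g=g''h$ for some $h\colon B\longrightarrow B''$. Irreducibility then forces either $g''$ to be a retraction or $h$ to be a section; as $g''$ is right almost split it is not a retraction, so $h$ is a section. Choosing $p$ with $ph=\id_B$, the morphism $e=hp$ is idempotent, and here I would invoke idempotent completeness of $\mathcal{C}$ to split $e$ and obtain an isomorphism $B''\cong B\coprod B'$ under which $h$ becomes the canonical inclusion. Transporting $g''$ along this isomorphism writes $g''=(g,g')$ for a suitable $g'\colon B'\longrightarrow C$, and since the minimal right almost split property is preserved under isomorphism, $(g,g')$ is minimal right almost split. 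Finally $B\neq0$, because $g$ is not a section whereas the zero map $0\longrightarrow C$ always is.

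For the converse I assume $B\neq0$ and $(g,g')\colon B\coprod B'\longrightarrow C$ minimal right almost split, and verify the two clauses defining irreducibility. That $g$ is neither a retraction nor a section is immediate: if $g$ were a retraction, say $gs=\id_C$, then $\binom{s}{0}$ would be a section of $(g,g')$, contradicting that a right almost split morphism is not a retraction; and if $g$ were a section then, $C$ being indecomposable and $B\neq0$, $g$ would be an isomorphism and hence a retraction, the same contradiction. The heart of the argument is the factorization clause. Given $g=hf$ with $f\colon B\longrightarrow Z$ and $h\colon Z\longrightarrow C$, I assume $h$ is not a retraction; right almost splitness then provides $\binom{u}{v}\colon Z\longrightarrow B\coprod B'$ with $gu+g'v=h$. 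I would form the endomorphism
\[
\beta=\begin{pmatrix} uf & 0\\ vf & \id_{B'}\end{pmatrix}\colon B\coprod B'\longrightarrow B\coprod B'
\]
and compute $(g,g')\beta=(guf+g'vf,\;g')=(hf,g')=(g,g')$. Right minimality of $(g,g')$ then makes $\beta$ an isomorphism, and from $\beta^{-1}$ one extracts that the diagonal entry $uf\colon B\longrightarrow B$ is invertible; hence $(uf)^{-1}u$ is a retraction of $f$, so $f$ is a section, as required.

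The step I expect to be the main obstacle is extracting invertibility of $uf$ from invertibility of the triangular matrix $\beta$, purely formally inside an additive category with no determinants available. Concretely I would write $\beta^{-1}=\left(\begin{smallmatrix}p&q\\r&s\end{smallmatrix}\right)$ and read off from $\beta\beta^{-1}=\id$ and $\beta^{-1}\beta=\id$ that $q=0$, $s=\id_{B'}$, and $(uf)p=p(uf)=\id_B$, so that $uf$ is an isomorphism with inverse $p$. This is exactly the place where idempotent completeness (used earlier to split off $B'$) and the categorical, ring-free nature of the computation have to be handled with care; the rest of the argument is a direct unwinding of the definitions of right almost split and right minimal morphisms.
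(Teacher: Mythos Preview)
The paper does not prove this theorem; it is quoted verbatim from Auslander--Reiten \cite[Theorem 2.4]{AR4} and used as a black box. Your proposal is correct and is essentially the classical argument from \cite{AR4}. The step you flag as delicate---extracting invertibility of $uf$ from that of the lower-triangular $\beta$---goes through exactly as you outline: from $\beta^{-1}\beta=\id$ the $(1,2)$ and $(2,2)$ entries give $q=0$ and $s=\id_{B'}$, and then the $(1,1)$ entries of $\beta\beta^{-1}=\id$ and $\beta^{-1}\beta=\id$ yield $(uf)p=\id_B$ and $p(uf)=\id_B$. One small remark: in the argument that $g$ is not a section you implicitly use that an indecomposable object in an idempotent complete additive category has no nontrivial idempotent endomorphisms, so that the idempotent $gr$ on $C$ is $0$ or $\id_C$; this is standard but worth making explicit since it is the second place idempotent completeness enters.
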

Let $\mathcal{C}$ be a subcategory of $\modd\Lambda$ and $X\in\modd\Lambda$. A morphism $\alpha:X\longrightarrow D$ is called a {\em left $\mathcal{C}$-approximation} if $D\in\mathcal{C}$ and for every $D^\prime\in\mathcal{C}$, the sequence of abelian groups $$\Hom_\Lambda(D,D^\prime)\overset{(\alpha,D^\prime)}{\longrightarrow}\Hom_\Lambda(X,D^\prime)\longrightarrow0$$ is exact. $\mathcal{C}$ is said to be a {\em covariantly finite subcategory} of $\modd\Lambda$ if for each $X\in\modd\Lambda$ there exists a left $\mathcal{C}$-approximation. Dually, the concepts of a {\em right $\mathcal{C}$-approximation} and a {\em contravariantly finite subcategory} are defined. Finally, $\mathcal{C}$ is called a {\em functorially finite subcategory} of $\modd\Lambda$ if it is both covariantly finite and contravariantly finite \cite[Page 81]{AS}.

The subcategory $\mcc$ of $\modd\Lambda$ is called of finite representation type if the number of isomorphism classes of indecomposable objects in $\mcc$ is finite.

\begin{remark}\label{mlra}
Let $\mcc$ be a contravariantly finite subcategory of $\modd\Lambda$ which is closed under direct summands. For every $\Lambda$-module $Y$ in $\mcc$, there exists a minimal right almost split morphism $g:X\longrightarrow Y$ in $\mcc$. Dually, if $\mcc$ is a covariantly finite subcategory of $\modd\Lambda$ which is closed under direct summands, then for every $\Lambda$-module $X$ in $\mcc$ there exists a minimal left almost split morphism $f:X\longrightarrow Y$ in $\mcc$ (see, \cite[Proposition 3.10]{AS}).\end{remark}
The radical of an additive category was defined for the first time by Kelly in \cite{K}. For more information about this concept, the readers are referred to \cite{S} and \cite[Appendix A.3]{ASS}.

Let $\mathcal{C}$ be an additive category. The \textit{(Jacobson) radical} of $\mathcal{C}$ is the two-sided ideal $\rad_\mathcal{C}$ in $\mathcal{C}$ defined by the formula
$$\rad_\mathcal{C}(X,Y)=\{h\in\Hom_\mathcal{C}(X,Y) \mid 1_X-gh\;\text{is invertible for any}\;g\in\Hom_\mathcal{C}(Y,X) \},$$
for all objects $X$ and $Y$ in $\mathcal{C}$.

For any $i\geq1$, the $i$-th power of $\rad_\mathcal{C}(X,Y)$ consists all finite sums of morphisms of the form
\begin{equation}
X= X_0 \overset{h_1}{\longrightarrow} X_1 \overset{h_2}{\longrightarrow}X_2\longrightarrow\cdots \longrightarrow X_{i-1} \overset{h_i}{\longrightarrow} X_i=Y \notag,
\end{equation}
where $h_j\in\rad_\mathcal{C}(X_{j-1},X_j)$, for every $1\leq j\leq i$. Thus, for any $i\geq1$ the $i$-th power $\rad_\mathcal{C}^i$ of $\rad_\mathcal{C}$ is defined by considering all morphisms in $\rad^i_\mathcal{C}(X,Y)$, for all objects $X$ and $Y$ of $\mathcal{C}$. Also, the \textit{infinite radical of $\mathcal{C}$} denoted by $\rad_\mathcal{C}^\infty$ is defined as the intersection of all powers $\rad_\mathcal{C}^i$, for $i\geq 1$. Indeed,$$\rad_\mathcal{C}^\infty\coloneqq\bigcap_{i\geq1}\rad_\mathcal{C}^i.$$
We denote by $\rad_\Lambda$ the radical of $\modd\Lambda$.

Let $\mcc$ be a full subcategory of $\modd\Lambda$. It is obvious that $\rad_{\mcc}(X,Y)=\rad_\Lambda(X,Y)$, for $\Lambda$-modules $X$ and $Y$ in $\mcc$. Thus, some facts that have been stated around $\rad_\Lambda$ are practical for $\rad_{\mcc}$. Specially, if $X$ is an indecomposable $\Lambda$-module in $\mcc$, then $\rad_{\mcc}(X,X)$ is the set of all non-isomorphisms on $X$.

We recall that the following properties are satisfied for the radical of a subcategory $\mathcal{C}$ of $\modd\Lambda$ which is closed under isomorphisms.
\begin{lemma}$($\cite[Page 80]{AS}$)$\label{lemrad}
Let $\mathcal{C}$ be a full subcategory of $\modd\Lambda$ that is closed under isomorphisms.
\begin{itemize}
\item[$(a)$]
Assume that $X$ and $Y$ are two objects of $\mathcal{C}$. $\rad_\mathcal{C}(X,Y)=\Hom_\mathcal{C}(X,Y)$ if and only if no indecomposable summand of $X$ is isomorphic to an indecomposable summand of $Y$.
\item[$(b)$]
Assume that $Y$ is an indecomposable object of $\mathcal{C}$. For a morphism $f:X\longrightarrow Y$ in $\mathcal{C}$ the following are equivalent:
\begin{itemize}
\item[$(1)$]
$f$ is not a retraction,
\item[$(2)$]
$f$ is in $\rad_\mathcal{C}(X, Y)$.
\end{itemize}
\item[$(c)$]
Assume that $Y$ is an indecomposable object of $\mathcal{C}$. For a morphism $g:Y\longrightarrow Z$ in $\mathcal{C}$ the following are equivalent:
\begin{itemize}
\item[$(1)$]
$g$ is not a section,
\item[$(2)$]
$g$ is in $\rad_\mathcal{C}(Y, Z)$.
\end{itemize}
\end{itemize}
\end{lemma}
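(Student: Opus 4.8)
The plan is to establish $(b)$ first, deduce $(c)$ by a dual argument, and then obtain $(a)$ by reducing to indecomposable summands. Throughout I would use the unravelled form of the defining condition: since $1_X-gh$ is invertible in $\End_\mathcal{C}(X)$ if and only if $1_Y-hg$ is invertible in $\End_\mathcal{C}(Y)$ (a standard fact, with the inverses related by $1+g(1_Y-hg)^{-1}h$), a morphism $h\colon X\to Y$ lies in $\rad_\mathcal{C}(X,Y)$ exactly when $1_Y-hg$ is invertible for every $g\in\Hom_\mathcal{C}(Y,X)$. Because $\mathcal{C}$ is a full subcategory of $\modd\Lambda$, all Hom- and endomorphism sets coincide with those computed in $\modd\Lambda$, and for an indecomposable $\Lambda$-module $Y$ of finite length Fitting's lemma guarantees that $\End_\mathcal{C}(Y)=\End_\Lambda(Y)$ is local, so its non-units are precisely its Jacobson radical.

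For $(b)$ let $Y$ be indecomposable and $f\colon X\to Y$. For $(2)\Rightarrow(1)$ I argue contrapositively: if $f$ is a retraction, choose $s$ with $fs=1_Y$ and take $g=s$ above; then $1_Y-fg=0$ is not invertible (as $Y\neq0$), so $f\notin\rad_\mathcal{C}(X,Y)$. For $(1)\Rightarrow(2)$, suppose $f$ is not a retraction and let $g\colon Y\to X$ be arbitrary. If $fg\in\End_\Lambda(Y)$ were invertible, then $f\bigl(g(fg)^{-1}\bigr)=1_Y$ would exhibit $f$ as a retraction, a contradiction; hence $fg$ is a non-unit, so $fg\in\rad\End_\Lambda(Y)$ and $1_Y-fg$ is a unit. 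As $g$ was arbitrary, $f\in\rad_\mathcal{C}(X,Y)$.

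Part $(c)$ is the formal dual of $(b)$: for $Y$ indecomposable and $g\colon Y\to Z$ one uses the condition that $1_Y-hg$ be invertible for all $h\colon Z\to Y$, notes that $g$ being a section produces an $h$ with $1_Y-hg=0$, and conversely that invertibility of any $hg$ would make $g$ a section. For $(a)$ I would fix decompositions $X=\bigoplus_i X_i$ and $Y=\bigoplus_j Y_j$ into indecomposables and use the additivity of the radical, namely that a morphism $(f_{ji})\colon X\to Y$ lies in $\rad_\mathcal{C}(X,Y)$ if and only if each component $f_{ji}$ lies in $\rad_\mathcal{C}(X_i,Y_j)$; this reduces the claim to the indecomposable case. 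There $\rad_\mathcal{C}(X_i,Y_j)=\Hom_\mathcal{C}(X_i,Y_j)$ fails exactly when some $f_{ji}$ is an isomorphism, i.e. when $X_i\cong Y_j$, since any non-isomorphism $X_i\to Y_j$ between indecomposables is a non-retraction and hence in the radical by $(b)$. I expect the main obstacle to be a careful justification of the additivity of the radical over finite direct sums, together with the equivalence of the two forms of the defining condition; both are standard but must be set up cleanly, after which the indecomposable cases $(b)$ and $(c)$ follow from locality of the endomorphism rings with essentially no further computation.
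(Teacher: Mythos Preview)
The paper does not give its own proof of this lemma: it is quoted verbatim as a known fact with a reference to \cite[Page 80]{AS}, and no argument is supplied. Your write-up is a correct and entirely standard proof of these statements, relying on locality of $\End_\Lambda(Y)$ for $Y$ indecomposable and the additivity of the radical with respect to finite direct sums; nothing is missing beyond the routine verifications you already flag (the equivalence $1_X-gh$ invertible $\Leftrightarrow$ $1_Y-hg$ invertible, and the componentwise description of $\rad_\mathcal{C}$ on direct sums).
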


Let $X$ and $Y$ be indecomposable $\Lambda$-modules. The relation between irreducible morphisms in $\modd\Lambda$ and
the radical of $\modd\Lambda$ has already been studied. In fact, a morphism $f:X\longrightarrow Y$ is irreducible if and only if $f\in\rad_\Lambda(X,Y)\backslash\rad_\Lambda^2(X,Y)$ \cite[Proposition 1.(a)]{B}.

In the following lemma, we show the fact above for a subcategory of $\modd\Lambda$ that is closed under isomorphisms and direct summands. The proof is similar to the proof of the case $\modd\Lambda$  (for example see \cite[IV. Lemma 1.6]{ASS}).
\begin{lemma}\label{irr-rad}
Let $\mathcal{C}$ be a subcategory of $\modd\Lambda$ that is closed under isomorphisms and direct summands and $X$ and $Y$ be two indecomposable objects in $\mcc$. A morphism $f:X\longrightarrow Y$ is irreducible in $\mcc$ if and only if $f\in\rad_{\mcc}(X,Y)\backslash\rad_{\mcc}^2(X,Y)$.
\end{lemma}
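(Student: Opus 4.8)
The plan is to adapt the classical argument for $\modd\Lambda$ (see \cite[IV. Lemma 1.6]{ASS}), noting that the only ambient facts it uses are that $\mcm$ is a Krull--Schmidt category closed under direct summands inside $\modd\Lambda$ --- both of which hold for any $n$-cluster tilting subcategory --- together with Lemma \ref{lemrad} and the fact recalled just above that $\rad_{\mcm}(X,X)$ is the set of non-isomorphisms when $X\in\ind\mcm$.

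First I would treat the implication ``irreducible $\Rightarrow$ in $\rad_{\mcm}\setminus\rad_{\mcm}^2$''. Since an irreducible $f:X\to Y$ is not a retraction and $Y$ is indecomposable, Lemma \ref{lemrad}(b) gives $f\in\rad_{\mcm}(X,Y)$. If we had $f\in\rad_{\mcm}^2(X,Y)$, we could write $f=\sum_{i=1}^m h_ig_i$ with $g_i\in\rad_{\mcm}(X,Z_i)$, $h_i\in\rad_{\mcm}(Z_i,Y)$ and $Z_i\in\mcm$; putting $Z=\bigoplus_i Z_i$, $g=(g_i):X\to Z$, $h=(h_i):Z\to Y$ we get $f=hg$, so by irreducibility $g$ is a section or $h$ is a retraction. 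In the first case a retraction $p=(p_i):Z\to X$ with $pg=1_X$ would give $1_X=\sum_i p_ig_i\in\rad_{\mcm}(X,X)$ (the radical being a two-sided ideal), contradicting that $1_X$ is an isomorphism; the second case is dual. Hence $f\notin\rad_{\mcm}^2(X,Y)$.

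For the converse, given $f\in\rad_{\mcm}(X,Y)\setminus\rad_{\mcm}^2(X,Y)$, Lemma \ref{lemrad}(b) and (c) (using that $X$ and $Y$ are indecomposable) show $f$ is neither a retraction nor a section. For any factorization $f=hg$ with $g:X\to Z$, $h:Z\to Y$ in $\mcm$, I would decompose $Z=\bigoplus_{i=1}^m Z_i$ into indecomposables --- each $Z_i\in\mcm$ since $\mcm$ is closed under summands --- and write $g=(g_i)$, $h=(h_i)$, so that $f=\sum_i h_ig_i$. If no $g_i$ were a section and no $h_i$ a retraction, Lemma \ref{lemrad}(b),(c) would force all $g_i\in\rad_{\mcm}(X,Z_i)$ and all $h_i\in\rad_{\mcm}(Z_i,Y)$, whence $f\in\rad_{\mcm}^2(X,Y)$, a contradiction; so some $g_{i_0}$ is a section or some $h_{i_0}$ is a retraction, and composing its splitting with the inclusion or projection of the summand $Z_{i_0}$ makes $g$ a section or $h$ a retraction, respectively. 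Thus $f$ is irreducible. I do not expect a genuine obstacle: the statement is a formal consequence of the radical formalism once one observes that $\mcm$ inherits the Krull--Schmidt property and closure under direct summands from $\modd\Lambda$.
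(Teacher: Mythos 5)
Your proposal is correct and follows essentially the same route as the paper: both directions use the same decomposition of the middle term into indecomposable summands, Lemma \ref{lemrad}, and the two-sided ideal property of the radical (your forward direction derives $1_X\in\rad_{\mcm}(X,X)$ from a hypothetical splitting, while the paper equivalently shows $\alpha$ cannot be a retraction and $\beta$ cannot be a section; these are the same argument run in opposite directions). No gaps.
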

\section{The infinite radical and the representation type of functorially finite subcategories}
In this section, we give the criteria to describe whether a functorially finite subcategory $\mcc$ of $\modd\Lambda$ is of finite representation type. In fact, we use the concept of the infinite radical of $\mcc$ and show that a functorially finite subcategory $\mcc$ of $\modd\Lambda$ which contains $D\Lambda$ or $\Lambda$ is of finite representation type if and only if $\rad^\infty_{\mcc}$ vanishes. Also, in this case we describe the morphisms in powers of the radical of $\mcc$ in terms of irreducible morphisms in $\mcc$. Moreover, we show that a functorially finite subcategory $\mcc$ of $\modd\Lambda$ that satisfies the specific condition is of finite representation type if and only if any family of monomorphisms (resp. epimorphisms) between indecomposable objects in $\mathcal{C}$ is noetherian (resp. conoetherian). In the rest of the paper, we assume that all subcategories of $\modd\Lambda$ are closed under direct summands and isomorphisms. It is obvious that these subcategories are the Krull--Schmidt variety.
	
First, we recall some basic notions and facts on functor categories and refer readers to \cite{Au,Au1,Au2}, for more details. Let $\mathcal{C}$ be an essentially small additive category. We denote by $\Mod(\mathcal{C})$ the category of all additive covariant functors from $\mathcal{C}$ to the category Ab consisting of all abelian groups. Every object of $\Mod(\mathcal{C})$ is called {\em $\mathcal{C}$-module} and morphisms of $\Mod(\mathcal{C})$ are all natural transformations of $\mathcal{C}$-modules. For a $\mathcal{C}$-module $F$, if there exists an epimorphism $$\coprod_{i\in I}\Hom_\mathcal{C}(C_i,-)\longrightarrow F\longrightarrow0,$$with $C_i$ in $\mathcal{C}$ and $I$ a finite set, then $F$ is called {\em finitely generated} and if there exists an exact sequence$$\coprod_{j\in J}\Hom_\mathcal{C}(C_j,-)\longrightarrow\coprod_{i\in I}\Hom_\mathcal{C}(C_i,-)\rightarrow F\rightarrow 0,$$with $C_i$ and $C_j$ in $\mathcal{C}$ and $I$ and $J$ finite sets, then $F$ is called {\em finitely presented}.

A $\mathcal{C}$-module $F^\prime$ is a {\em submodule} of the $\mathcal{C}$-module $F$, if there is a morphism $g:F^\prime\longrightarrow F$ such that $g_C:F^\prime(C)\longrightarrow F(C)$ is an inclusion morphism of abelian groups for each $C\in\mathcal{C}$.

A $\mathcal{C}$-module $F$ is called {\em noetherian} (resp., {\em artinian}) if it satisfies the ascending (resp., descending) chain condition on submodules and if $F$ is both noetherian and artinian then it is called {\em finite}.

Also, a $\mathcal{C}$-module $F$ is said to be {\em locally finite} if every finitely generated submodule of $F$ is finite and the category $\Mod(\mathcal{C})$ is said to be {\em locally finite} if every $\mathcal{C}$-module is locally finite. A $\mathcal{C}$-module $F$ is called {\em simple} if it is not zero and only its submodules are $0$ and $F$.

Let $\mathcal{C}$ be a Krull--Schmidt variety and $F$ be a $\mathcal{C}$-module.
Suppose that $X$ is an object in $\mathcal{C}$. An element $x$ in $F(X)$ is said to be a {\em minimal element} if $x\neq0$ and has the property that given any proper epimorphism $f:X\longrightarrow X^\prime$, i.e., an epimorphism that is not an isomorphism, then $F(f)(x)=0$. An element $x$ in $F(X)$ is said to be a {\em universally minimal element} if $x\neq0$ and has the property that given any $f:X\longrightarrow X^\prime$ which is not a section, then $F(f)(x)=0$.

\begin{theorem}\label{th0}
$($\cite[Theorem 2.10]{Au2}$)$  A Krull-Schmidt variety $\mcc$ has the property that $\Mod(\mathcal{C})$ is locally
finite if and only if
\begin{itemize}
\item[(i)]
Given any indecomposable object $C\in\mcc$ there is a left almost split morphism $f:C \to B$.
\item[(ii)]
 If $F$ is a nonzero $\mathcal{C}$-module, then there is an object $C\in\mathcal{C}$ such that $F(C)$ contains a universally minimal element.
\end{itemize}
\end{theorem}
 We have the following result for the functorially finite subcategories of $\modd\Lambda$.
 \begin{proposition}$($\cite[Corollary 4.3]{FN}$)$\label{loc}
 Let $\mcc$ be a functorially finite subcategory of $\modd\Lambda$. $\Mod(\mathcal{C})$ is locally finite if and only if $\mathcal{C}$ is of finite representation type.
 \end{proposition}
We need the following crucial lemma in the rest of this section.
\begin{lemma}\label{lem0}
Let $\mcc$ be a subcategory of $\modd\Lambda$ and $F$ be an $\mcc$-module. The following statements hold.
\begin{itemize}
\item[(i)] If $x$ in $F(X)$ is a minimal element, then $X$ is an indecomposable $\Lambda$-module.
\item[(ii)] If $X\in\mcc$ and $x$ in $F(X)$ is not zero, then there is an epimorphism $f:X\longrightarrow X^\prime$ in $\mcc$ such that $F(f)(x)$ is a minimal element in $F(X^\prime)$.
\end{itemize}
\end{lemma}
\begin{proof}
$($a$)$: Assume that $X$ is not indecomposable, so there is a decomposition $X=X_1\oplus X_2$ with $X_1$ and $X_2$ nonzero $\Lambda$-modules. Since $\mcc$ is closed under direct summands, $X_1,X_2\in\mcc$. Consider the natural projection $p_j:X\longrightarrow X_j$ in $\mcc$, for $j=1,2$. We know that $1_X=i_1p_1+i_2p_2$, where $i_1$ and $i_2$ are natural inclusions. We apply $F$ and obtain
\begin{equation}\label{eq0}
1_{F(X)}=F(1_X)=F(i_1)F(p_1)+F(i_2)F(p_2).
\end{equation}
Since $x$ in $F(X)$ is a minimal element and $p_1$ and $p_2$ are proper epimorphisms in $\mcc$, we have $F(p_1)(x)=F(p_2)(x)=0$. Therefore, by putting $x$ in the equality \eqref{eq0}, we have $x=1_{F(X)}(x)=0$. But this is a contradiction.

$($b$)$: Assume that $\pi_{K}$ is the canonical epimorphism $X\longrightarrow X/K$, for every submodule $K$ of $X$. Set $$\mathcal{S}\coloneqq \{ K \text{\, is a submodule of\,\,} X\,|\, X/K\in\mcc \text{\, and\,\,} F(\pi_K)(x)\neq0 \}.$$
Zero object belongs to $\mathcal{S}$. Therefore, $\mathcal{S}$ is not empty and by \cite[Proposition 10.9]{AF}, $\mathcal{S}$ has a maximal element such as $Y$. Since $Y\in\mathcal{S}$, $X/Y\in\mcc$ and the canonical epimorphism $\pi_Y:X\longrightarrow X/Y$ lies in $\mcc$. We claim that $F(\pi_Y)(x)$ is a minimal element in $F(X/Y)$. It is clear that $F(\pi_ Y)(x)$ is nonzero. Consider a proper epimorphism $g:X/Y\longrightarrow Z$ in $\mcc$. $\mathrm{Ker}(g)$ is nonzero and a submodule of $X/Y$. Then there exists a submodule $W$ of $X$ such that $Y$ is a submodule of $W$ and $\mathrm{Ker}(g)\cong W/Y$ and we have $Z\cong X/W.$ Since $\mcc$ is closed under isomorphisms, $X/W$ lies in $\mcc$. Now, suppose that $F(g)F(\pi_Y)(x)$ is nonzero. Therefore, $F(\pi_{W})(x)$ is nonzero and so $W$ belongs to $\mathcal{S}$. But this contradicts the maximality of $Y$. Hence, $F(g)F(\pi_Y)(x)=0$.
\end{proof}
The above lemma has already been proved for the category $\Mod\Lambda$, by Auslander (see \cite[Proposition 1.4]{AuL}). Now, we recall the definition of the noetherian and conoetherian family of morphisms from \cite[Section 3]{Au2}.

\begin{definition}
Let $\mathcal{C}$ be an additive category. A family $\{f_j\}_{j\in J}$ of morphisms in $\mathcal{C}$ is called {\em noetherian} if given any chain
\begin{equation}
C_0 \overset{f_{j_1}}{\longrightarrow} C_{1}\overset{f_{j_2}}{\longrightarrow} \cdots \overset{f_{j_{i-1}}}{\longrightarrow}C_{i-1}\overset{f_{j_i}}{\longrightarrow} C_{i}\overset{f_{j_{i+1}}}{\longrightarrow} \cdots \notag
\end{equation}
of morphisms in $\{f_j\}_{j\in J}$ such that $f_{j_i}\cdots f_{j_2}f_{j_1} \neq0$ for all $j_i$, there is an integer $d\geq 1$ such that $f_{j_k}$ is an isomorphism for all $k\geq d$. A family $\{f_j\}_{j\in J}$ of morphisms in $\mathcal{C}$ is called {\em conoetherian} if given any chain
\begin{equation}
\cdots\overset{f_{j_{i+1}}}{\longrightarrow}C_{i} \overset{f_{{j_i}}}{\longrightarrow} C_{i-1}\overset{f_{j_{i-1}}}{\longrightarrow} \cdots {\longrightarrow}C_1\overset{f_{j_1}}{\longrightarrow} C_{0} \notag\end{equation}of morphisms in $\{f_j\}_{j\in J}$ such that $f_{j_1}\cdots f_{j_{i-1}}f_{j_i}\neq0$ for all $j_i$, there is an integer $d\geq 1$ such that $f_{j_k}$ is an isomorphism for all $k\geq d$.
\end{definition}
According to the Auslander's works, we have $\modd\Lambda$ is of finite representation type if and only if every family of monomorphisms (resp. epimorphisms) between indecomposable objects in $\modd\Lambda$ is noetherian (resp. conoetherian) (for example see \cite[Theorem 3.1]{Au2} and \cite[Theorem A]{AuL}). It is natural to ask when these results are satisfied for a functorially finite subcategory $\mathcal{C}$ of $\modd\Lambda$. If $\mathcal{C}$ is of finite representation type, then it is easy to see that any family of monomorphisms (resp. epimorphisms) between indecomposable objects in $\mathcal{C}$ is noetherian (in this case, we do not need the functorially finiteness condition). To prove the other direction, we need the following new definition.
\begin{definition}\label{sm}
Let $\mathcal{C}$ be a Krull--Schmidt variety and $F$ be a $\mathcal{C}$-module. Suppose that $X$ is an object in $\mathcal{C}$. An element $x$ in $F(X)$ is said to be a {\em strongly minimal element} if $x\neq0$ and has the property that given any $f:X\longrightarrow X^\prime$ which is not a monomorphism, then $F(f)(x)=0$.
\end{definition}
It is obvious that any universally minimal element in $F(X)$ is strongly minimal in $F(X)$ and any strongly minimal element in $F(X)$ is minimal in $F(X)$. Specially, suppose that $F$ is a $(\modd\Lambda)$-module and $X\in\modd\Lambda$. It is easy to see that if $x$ is minimal in $F(X)$ then it is also strongly minimal in $F(X)$. Therefore, the minimal elements are exactly strongly minimal elements, for $(\modd\Lambda)$-modules. In fact, we have the following result.

\begin{proposition}$($\cite[Proposition 1.4]{AuL}$)$\label{pro2}
 Let $F$ be an arbitrary $(\modd\Lambda)$-module and $X$ be an object in $\modd\Lambda$. A nonzero element $x$ is minimal in $F(X)$ if and only if a morphism $f : X\to X^\prime$ in $\modd\Lambda$ is a monomorphism whenever it has the property $F(f)(x)$ in $F(X^\prime)$ is not zero.
\end{proposition}
\begin{remark}\label{r1}
It is easy to see that Proposition \ref{pro2} is satisfied for any subcategory of $\modd\Lambda$ which is closed under the image of morphisms. In fact, if $\mcc$ is a subcategory of $\modd\Lambda$ which is closed under the image of morphisms and $X$ is an object in $\mathcal{C}$, then a nonzero element $x$ is minimal in $F(X)$ if and only if it is strongly minimal in $F(X)$.
\end{remark}
 As we mentioned before, Auslander proved that if every family of monomorphisms (resp. epimorphisms) between indecomposable objects in $\modd\Lambda$ is noetherian (resp. conoetherian), then $\modd\Lambda$ is of finite representation type. For the proof of these results, he used Lemma \ref{lem0} that is always satisfied for $\modd\Lambda$. Now, consider a functorially finite subcategory $\mcc$ of $\modd\Lambda$. For proving these results for $\mcc$, we can not use the Auslander's method. Instead, we use the following condition.
\begin{condition}\label{c1}
For any $\mcc$-module $F$, $X\in\mcc$ and $0\neq x\in F(X)$, there is a morphism $f:X\longrightarrow X^\prime$ in $\mcc$ such that $F(f)(x)$ is a strongly minimal element in $F(X^\prime)$.
\end{condition}
 \begin{proposition}\label{pro3}
 Let $\mcc$ be a functorially finite subcategory of $\modd\Lambda$. If Condition \ref{c1} is satisfied and any family of monomorphisms between indecomposable objects in $\mathcal{C}$ is noetherian, then $\mathcal{C}$ is of finite representation type.
 \end{proposition}
 \begin{proof}
 Assume that $\mcc$ is not of finite representation type. By Proposition \ref{loc}, $\Mod(\mathcal{C})$ is not locally finite. By Remark \ref{mlra}, $\mcc$ has left almost split morphisms, so Theorem \ref{th0} implies that there exists a nonzero $\mcc$-module $F$ such that for any $X\in\mcc$, $F(X)$ has no universally minimal elements. More precisely, for any $X\in\mcc$ and nonzero element $x\in F(X)$ there exists a morphism $g:X\longrightarrow Y$ in $\mcc$ such that is not a section and $F(g)(x)\neq 0$ in $F(Y)$. We claim that there exists an infinite sequence of proper monomorphisms between indecomposable $\Lambda$-modules in $\mcc$, which gives a contradiction. $F$ is nonzero, so we can fix an $X\in\mcc$ such that $F(X)\neq0$. Consider a nonzero element $x\in F(X)$. By the assumption, there is a morphism $f_0:X\longrightarrow X_0$ in $\mcc$ such that $x_0\coloneqq F(f_0)(x)$ is strongly minimal in $F(X_0)$. Clearly, $x_0$ is nonzero and since $F(X_0)$ has no universally minimal elements, there exists a morphism $g_0:X_0\longrightarrow Y_0$ in $\mcc$ such that is not a section and $F(g_0)(x_0)\neq0$ in $F(Y_0)$. Consider $Y_0\in \mcc$ and the nonzero element $F(g_0)(x_0)$ in $F(Y_0)$. We use the assumption again and conclude that there exists a morphism $h_0:Y_0\longrightarrow X_1$ in $\mcc$ such that $x_1\coloneqq F(h_0)F(g_0)(x_0)$ is a strongly minimal element in $F(X_1)$. Set $f_1\coloneqq h_0g_0$. If $f_1$ is not a monomorphism, then $x_1=F(f_1)(x_0)=0$ since $x_0$ is a strongly minimal element in $F(X_0)$. But this is a contradiction since $x_1$ is nonzero. Thus, $f_1$ is a monomorphism. If $f_1$ is not a proper monomorphism, then $f_1$ is an isomorphism. Therefore, there exists a left inverse for $f_1$ and so $g_0$ is a section that is a contradiction. Thus, $f_1$ is a proper monomorphism. Until here, we have the following diagram
  \begin{center}
   \begin{tikzpicture}
    \node (X1) at (0,1) {$X$};
    \node (X2) at (3,1) {$X_0$};
    \node (X3) at (6,1) {$X_1,$};
    \node (X4) at (3,-1) {$Y_0$};
    \draw [->,thick] (X1) -- (X2) node [midway,above] {$f_0$};
    \draw [->,thick] (X2) -- (X3) node [midway,above] {$f_1$};
    \draw [->,thick,dashed] (X2) -- (X4) node [midway,left] {$g_0$};
    \draw [->,thick,dashed] (X4) -- (X3) node [midway,right] {$\quad h_0$};\end{tikzpicture}
    \end{center}
    that $f_1$ is a proper monomorphism and $x_0=F(f_0)(x)$ and $x_1=F(f_1f_0)(x)$ are strongly minimal elements in $F(X_0)$ and $F(X_1)$, respectively. Assume inductively that we obtain a sequence
    \begin{equation}
    X\overset{f_0}{\longrightarrow}X_0 \overset{f_1}{\longrightarrow} X_{1}{\longrightarrow} \cdots \overset{f_{k-1}}{\longrightarrow} X_{k-1}\overset{f_k}{\longrightarrow} X_k. \notag
    \end{equation}
    in $\mcc$, for $k\geq1$ such that $f_i:X_{i-1}\longrightarrow X_i$ is a proper monomorphism for each $1\leq i\leq k$ and $x_i\coloneqq F(f_i\cdots f_0)(x)$ is a strongly minimal element in $F(X_i)$ for all $0\leq i\leq k$. Since $x_k$ is a strongly minimal element in $F(X_k)$, it is nonzero. By assumption, $F$ has no universally minimal elements, so there exists a morphism $g_k:X_k\longrightarrow Y_k$ in $\mcc$ such that is not a section and $F(g_k)(x_k)\neq0$ in $F(Y_k)$. Hence, by the assumption, there exists a morphism $h_k:Y_k\longrightarrow X_{k+1}$ in $\mcc$ such that $x_{k+1}\coloneqq F(h_k)F(g_k)(x_k)$ is strongly minimal in $F(X_{k+1})$. Set $f_{k+1}\coloneqq h_kg_k$. Therefore, we have $x_{k+1}=F(h_k)F(g_k)(x_k)=F(f_{k+1}f_k\cdots f_0)(x)$ that is strongly minimal in $F(X_{k+1})$. Also $f_{k+1}$ is a proper monomorphism, by using a similar argument applied in the first step of the induction. Thus, we can construct the infinite sequence
    \begin{equation}
    X \overset{f_0}{\longrightarrow} X_{0}\overset{f_1}{\longrightarrow} X_1 \overset{f_2}{\longrightarrow}X_2\overset{f_3}{\longrightarrow} X_3\overset{f_4}{\longrightarrow} \cdots \notag
    \end{equation}
    in $\mcc$ such that $f_i$ is a proper monomorphism, for any $i\geq1$. For any $i\geq0$, $x_i$ is a strongly minimal element in $F(X_i)$ and consequently $x_i$ is a minimal element in $F(X_i)$. Thus, by Lemma \ref{lem0}, $X_i$ is an indecomposable $\Lambda$-module for any $i\geq0$. Therefore, we have the sequence
    \begin{equation}
    X_{0}\overset{f_1}{\longrightarrow} X_1 \overset{f_2}{\longrightarrow}X_2\overset{f_3}{\longrightarrow} X_3\overset{f_4}{\longrightarrow} \cdots\notag
    \end{equation}
     between indecomposable $\Lambda$-modules in $\mcc$ which is the same desired sequence.
  \end{proof}
 In Theorem \ref{th11}, we will show that the converse of Proposition \ref{pro3} is true provided that $\mcc$ contains $\Lambda$ or $D\Lambda$.
 
 If we consider additive contravariant functors $F$ from $\mathcal{C}$ to Ab and define the suitable version of strongly minimal elements, then by duality, we can prove Proposition \ref{pro3}, for the case that any family of epimorphisms between indecomposable objects in $\mathcal{C}$ is conoetherian.

  Remark \ref{r1} and Lemma \ref{lem0} imply that Condition \ref{c1} is satisfied for a functorially finite subcategory  $\mcc$ of $\modd\Lambda$ which is closed under the image of morphisms. Therefore, we have the following result.
  \begin{corollary}\label{cor1}
 Let $\mcc$ be a functorially finite subcategory of $\modd\Lambda$ which is closed under the image of morphisms. If any family of monomorphisms (resp. epimorphisms) between indecomposable objects in $\mathcal{C}$ is noetherian (resp. conoetherian), then $\mathcal{C}$ is of finite representation type. Specially, if any family of monomorphisms (resp. epimorphisms) between indecomposable $\Lambda$-modules is noetherian (resp. conoetherian), then $\modd\Lambda$ is of finite representation type.
 \end{corollary}

Assume that $\mcc$ is an arbitrary subcategory of $\modd\Lambda$. As we mentioned before, if $\mathcal{C}$ is of finite representation type, then it is easy to see that any family of monomorphisms (resp. epimorphisms) between indecomposable objects in $\mathcal{C}$ is noetherian (resp. conoetherian).
 \begin{proposition}\label{pro5}
 Let $\mcc$ be an arbitrary subcategory of $\modd\Lambda$. If $\mathcal{C}$ is of finite representation type, then any family of monomorphisms (resp. epimorphisms) between indecomposable objects in $\mathcal{C}$ is noetherian (resp. conoetherian).
 \end{proposition}
 \begin{proof}
 Assume that there exists an infinite sequence
  \begin{equation}
 X_{0}\overset{f_1}{\longrightarrow} X_1 \overset{f_2}{\longrightarrow}X_2\overset{f_3}{\longrightarrow} X_3\overset{f_4}{\longrightarrow} \cdots \notag
 \end{equation}
  of proper monomorphisms between indecomposable $\Lambda$-modules in $\mcc$. $f_1$ is a monomorphism which is not an epimorphism, so the length of $X_0$ as $R$-module is strictly less than the length of $X_1$ as $R$-module. By repeating this process for any $f_i$, we conclude that there are infinitely many non-isomorphic indecomposable $\Lambda$-module in $\mcc$ and so $\mcc$ is not of finite representation type. By duality, if $\mathcal{C}$ is of finite representation type, then any family of epimorphisms between indecomposable objects in $\mathcal{C}$ is conoetherian.
  \end{proof}
  As a consequence of Proposition \ref{pro5} and Corollary \ref{cor1}, we have the following result.
 \begin{corollary}\label{cor2}
 Let $\mcc$ be a functorially finite subcategory of $\modd\Lambda$ which is closed under the image of morphisms. Then any family of monomorphisms (resp. epimorphisms) between indecomposable objects in $\mathcal{C}$ is noetherian (resp. conoetherian) if and only if $\mathcal{C}$ is of finite representation type. Specially, any family of monomorphisms (resp. epimorphisms) between indecomposable $\Lambda$-modules is noetherian (resp. conoetherian) if and only if $\modd\Lambda$ is of finite representation type.
\end{corollary}
Let $\mcc$ be a functorially finite subcategory of $\modd\Lambda$. By Lemma \ref{lem0}, for any $\mcc$-module $F$, $X\in\mcc$ and and $0\neq x\in F(X)$, there is an epimorphism $f:X\longrightarrow X^\prime$ in $\mcc$ such that $F(f)(x)$ is a minimal element in $F(X^\prime)$. Therefore, by using an argument similar to the proof of Proposition \ref{pro3}, we have the following result.
  \begin{proposition}\label{pro6}
  Let $\mcc$ be a functorially finite subcategory of $\modd\Lambda$. If any family of morphisms between indecomposable objects in $\mathcal{C}$ is noetherian, then $\mathcal{C}$ is of finite representation type.
  \end{proposition}
We recall the Harada--Sai lemma that was stated for the first time by Harada and Sai in \cite[Lemma 9]{HS}.
\begin{lemma}$($Harada-Sai Lemma$)($\cite[Page 115]{R}$)$
 Let $\Lambda$ be an artin algebra and $M_1,\cdots,M_{2^b}$ be indecomposable $\Lambda$-modules of length at most $b$. Suppose that $f_i:M_i\rightarrow M_{i+1}$ is a non-isomorphism, for $1\leq i\leq 2^b-1$. Then the composition $f_{2^b-1}\cdots f_2f_1$ is zero.
 \end{lemma}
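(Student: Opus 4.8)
The plan is to prove the Harada--Sai Lemma by the standard induction, set up with enough strength to survive the recursion. Precisely, I would prove by induction on $m$ the sharper claim that whenever $f_1,\dots,f_{2^m-1}$ are non-isomorphisms between indecomposable $\Lambda$-modules of length at most $b$, the image of the composition $f_{2^m-1}\cdots f_1$ has length at most $b-m$. Taking $m=b$ then forces this composition to vanish, which is exactly the assertion of the lemma. The base case $m=0$ is vacuous, since the empty composition on $M_1$ has image $M_1$, of length at most $b$.

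For the inductive step I would fix $1\le m\le b$, put $r=2^{m-1}$, and factor $h=f_{2^m-1}\cdots f_1$ as $h=\gamma\circ f_r\circ\beta$, where $\beta=f_{r-1}\cdots f_1$ and $\gamma=f_{2^m-1}\cdots f_{r+1}$ are each compositions of $2^{m-1}-1$ non-isomorphisms. By the inductive hypothesis, $\ell(\im\beta)\le b-m+1$ and $\ell(\im\gamma)\le b-m+1$. Since $\im h=(\gamma f_r)(\im\beta)$ is a homomorphic image of $\im\beta$ and is contained in $\im\gamma$, the bound $\ell(\im h)\le b-m$ is immediate unless $\ell(\im\beta)=\ell(\im\gamma)=b-m+1$; in that remaining case it suffices to rule out $\ell(\im h)=b-m+1$.

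So suppose $\ell(\im h)=b-m+1$. The restriction of $\gamma f_r$ to $\im\beta$ then surjects onto $\im h$ and is a homomorphism between modules of equal finite length, hence an isomorphism; in particular $f_r$ is injective on $\im\beta$, so $K\coloneqq f_r(\im\beta)$ is a submodule of $M_{r+1}$ of length $b-m+1$ on which $\gamma$ is injective, with $\gamma(K)=\im h$. Because $\im h\subseteq\im\gamma$ and both have length $b-m+1$, in fact $\gamma(K)=\im\gamma$, and a routine check then gives the decomposition $M_{r+1}=K\oplus\ker\gamma$. As $M_{r+1}$ is indecomposable and $\ell(K)\ge1$ (this is where the hypothesis $m\le b$ is used), we conclude $\ker\gamma=0$ and $K=M_{r+1}$; hence $f_r$ carries $\im\beta$ isomorphically onto $M_{r+1}$, so $f_r$ is a split epimorphism, and then indecomposability of $M_r$ forces $\ker f_r=0$, making $f_r$ an isomorphism --- contrary to hypothesis. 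This closes the induction.

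I expect the main obstacle to be exactly this last step: the case in which the image lengths fail to drop upon doubling the chain. Its resolution requires careful bookkeeping with images and kernels together with two separate appeals to indecomposability (of $M_{r+1}$ to kill $\ker\gamma$, and of $M_r$ to kill $\ker f_r$), whereas everything else is purely formal. It may be worth isolating beforehand the elementary fact that underlies the shrinking of images --- a composition of non-isomorphisms between nonzero indecomposable modules is never an isomorphism, since otherwise the first map would be a split monomorphism into an indecomposable module and hence itself an isomorphism.
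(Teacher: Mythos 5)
Your proposal is correct. Note, however, that the paper does not prove this lemma at all: it is quoted verbatim from Ringel's report (the citation \cite[Page 115]{R}) and used as a black box, so there is no in-paper argument to compare against. What you give is the classical Harada--Sai induction in its standard strengthened form: $\ell(\im(f_{2^m-1}\cdots f_1))\le b-m$, proved by splitting the chain at $r=2^{m-1}$ and handling the critical case where neither half drops the image length, via the two direct-sum decompositions $M_{r+1}=K\oplus\ker\gamma$ and $M_r\cong M_{r+1}\oplus\ker f_r$ and indecomposability. All steps check out, including the degenerate instance $m=1$ where $\beta$ and $\gamma$ are empty compositions; the only cosmetic remark is that one could equally take $m=1$ as the base case (a non-isomorphism between indecomposables of length $\le b$ has image of length $\le b-1$, by the same split-mono/split-epi argument), which is the form usually found in Ringel and in Auslander--Reiten--Smal{\o}.
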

Now, we are ready to prove the first main result in this section.
\begin{theorem}\label{main}
Let $\mcc$ be a functorially finite subcategory of $\modd\Lambda$.
\begin{itemize}
\item[(a)]
If $\mcc$ contains $D\Lambda$, then the following are equivalent.
\begin{itemize}
\item[(i)]
There exists $t\in\mathbb{N}$ such that for any $X\in\ind\mcc$, $\rad^t_{\mcc}(X,-)=0$.
\item[(ii)]
For any $X\in\ind\mcc$, there exists $t\in\mathbb{N}$ such that $\rad^t_{\mcc}(X,-)=0$.
\item[(iii)]
Any family of morphisms between indecomposable $\Lambda$-modules in $\mcc$ is noetherian.
\item[(iv)]
$\mcc$ is of finite representation type.
\item[(v)]
$\rad^\infty_{\mcc}=0$.
\end{itemize}
\item[(b)]
If $\mcc$ contains $\Lambda$, then the following are equivalent.
\begin{itemize}
\item[(i)]
There exists $t\in\mathbb{N}$ such that for any $X\in\ind\mcc$, $\rad^t_{\mcc}(-,X)=0$.
\item[(ii)]
For any $X\in\ind\mcc$, there exists $t\in\mathbb{N}$ such that $\rad^t_{\mcc}(-,X)=0$.
\item[(iii)]
Any family of morphisms between indecomposable $\Lambda$-modules in $\mcc$ is conoetherian.
\item[(iv)]
$\mcc$ is of finite representation type.
\item[(v)]
$\rad^\infty_{\mcc}=0$.
\end{itemize}
\end{itemize}
\end{theorem}
\begin{proof}
We only prove $(\mathrm{a})$, because the proof of $(\mathrm{b})$ is dual.

$(\mathrm{i})\Rightarrow(\mathrm{ii})$ It is clear.

$(\mathrm{ii})\Rightarrow(\mathrm{iii})$ Assume that there is a family $\{f_j\}_{j\in J}$ of morphisms between indecomposable $\Lambda$-modules in $\mcc$ such that is not noetherian. So there is an infinite sequence
\begin{equation}
X_0 \overset{f_1}{\longrightarrow} X_{1}\overset{f_2}{\longrightarrow} X_2 \overset{f_3}{\longrightarrow}X_3\overset{f_4}{\longrightarrow} \cdots \notag
\end{equation}
of morphisms between indecomposable $\Lambda$-modules in $\mcc$ such that for any $i\in\mathbb{N}$, $f_i\cdots f_1\neq0$ and $f_i$ is not an isomorphism.
Therefore, for any $i\in\mathbb{N}$, $f_i\dots f_1\in\rad^i_{\mcc}(X_0,X_i)$ and so $\rad^i_{\mcc}(X_0,-)\neq0$. This is a contradiction.

$(\mathrm{iii})\Rightarrow(\mathrm{iv})$ Follows by Proposition \ref{pro6}.

$(\mathrm{iv})\Rightarrow(\mathrm{v})$ Assume that $\mcc$ is of finite representation type, so the number of isomorphism classes of indecomposable $\Lambda$-modules in $\mcc$ is finite. Suppose that $b$ is the maximal length of indecomposable $\Lambda$-modules in $\mcc$. By Harada--Sai Lemma, $\rad^{2^b-1}_{\mcc}=0$ and therefore $\rad^\infty_{\mcc}=0$.

$(\mathrm{v})\Rightarrow(\mathrm{ii})$ Assume $\rad^\infty_{\mcc}=0$ and consider an arbitrary $X\in\ind\mcc$. By assumption, $D\Lambda\in\mcc$. So $\rad^\infty_{\mcc}(X,D\Lambda)=0$. It is obvious that $\Hom_{\mcc}(A,B)=\Hom_\Lambda(A,B)$ is of finite length, for every two $\Lambda$-modules $A,B\in\mcc$. Thus, the descending chain
$$\Hom_{\mcc}(A,B)\supseteq \rad_{\mcc}(A,B)\supseteq \rad_{\mcc}^2(A,B)\supseteq\rad_{\mcc}^3(A,B)\supseteq\cdots$$  is stationary, and hence there exists $t\in\mathbb{N}$ such that $\rad_{\mcc}^\infty(A,B)=\rad_{\mcc}^t(A,B)$. Specially, $\rad_{\mcc}^\infty(X,D\Lambda)=\rad_{\mcc}^t(X,D\Lambda)=0$ for some $t\in\mathbb{N}$. We claim that for every $Y\in\mcc$, $\rad_{\mcc}^t(X,Y)=0$. Assume that $g$ is a morphism in $\rad_{\mcc}^t(X,Y)$. Since $D\Lambda$ is an injective cogenerator, there is a monomorphism $f:Y\longrightarrow (D\Lambda)^k$. The composition $fg$ belongs to $\rad_{\mcc}^t(X,(D\Lambda)^k)$ and so $fg=0$ and consequently $g$ is zero because $f$ is a monomorphism. Therefore, $\rad_{\mcc}^t(X,Y)=0$.

$(\mathrm{ii})\Rightarrow(\mathrm{i})$ Assume that for any $X\in\ind\mcc$, there exists $t_X\in\mathbb{N}$ such that $\rad^{t_X}_{\mcc}(X,-)=0$.
By the previous parts of the proof, we conclude that $\mcc$ is of finite representation type. So the number of isomorphism classes of indecomposable $\Lambda$-modules in $\mcc$ is finite. Set $t=\max\{t_X\,|\, X\in\ind\mcc\}$. Hence, $\rad^t_{\mcc}(X,-)=0$ for any $X\in\ind\mcc$.
\end{proof}

Let $\mcc$ be a functorially finite subcategory of $\modd\Lambda$ which contains $\Lambda$ or $D\Lambda$. In the proof of Theorem \ref{main}, we observe that if $\mcc$ is of finite representation type then there exists an integer $d\geq1$ such that $\rad^d_{\mcc}=0$. Such an integer $d$ is called the {\em nilpotency index} of the radical of $\mcc$. Specially, $\rad^{2^b-1}_{\mcc}=0$ where $b$ is the maximal length of indecomposable $\Lambda$-modules in $\mcc$.
Due to Harada--Sai Lemma, the nilpotency index $2^b-1$ is called Harada--Sai bound.

We show that when a functorially finite subcategory $\mcc$ of $\modd\Lambda$ contains $\Lambda$ or $D\Lambda$ and it is of finite representation type, Condition \ref{c1} is satisfied.
\begin{proposition}\label{pro7}
Let $\mcc$ be a functorially finite subcategory of $\modd\Lambda$ which contains $\Lambda$ or $D\Lambda$. If $\mcc$ is of finite representation type, then for any $\mcc$-module $F$, $X\in\mcc$ and $0\neq x\in F(X)$, there is a morphism $f:X\longrightarrow X^\prime$ in $\mcc$ such that $F(f)(x)$ is a strongly minimal element in $F(X^\prime)$.
\end{proposition}
\begin{proof}
Assume that $F$ is a $\mcc$-module, $X\in\mcc$ and $0\neq x\in F(X)$. We consider two cases. Case 1: assume that for any $X^\prime\in\mcc$ and any morphism $h:X\to X^\prime$ which is not a monomorphism, $F(h)(x)=0$. We put $f\coloneqq 1_X$. It is obvious that $F(f)(x)$ is strongly minimal in $F(X)$.

\noindent
Case 2: if the case one is not satisfied, then there exist some $X^\prime\in\mcc$ and morphisms $h:X\to X^\prime$  which is not a monomorphism with the property $F(h)(x)\neq0$. More precisely, there is a non-empty set
$$\mathcal{A}=\{h\, |\, h:X\to X^\prime \text{ is not a monomorphism and\,} F(h)(x)\neq0\}.$$
$\mcc$ is of finite representation type and so Theorem \ref{main} implies that $\rad^\infty_{\mcc}=0$. Hence there exists an integer $m\geq1$ such that $\rad^m_{\mcc}=0$ and for any morphism $h\in\mathcal{A}$, there is $0<d_h<m$ such that $h\in\rad^{d_h}_{\mcc}(X,X^\prime)\backslash \rad^{d_h+1}_{\mcc}(X,X^\prime)$. Let $d=\mathrm{max}\{d_h\,|\,h\in\mathcal{A}\}$. Consider the morphism $\hat{h}:X\to X^\prime$ in $\mathcal{A}$ such that $\hat{h}\in\rad^{d}_{\mcc}(X,X^\prime)\backslash \rad^{d+1}_{\mcc}(X,X^\prime)$. Suppose that $X^\prime=\bigoplus_{i=1}^kX^\prime_i$ is the decomposition of $X^\prime$ into indecomposable direct summands. Now, we can write
 $$\hat{h}\coloneqq\left[\begin{array}{lll}\hat{h}_1\\ \vdots\\\hat{h}_k\end{array}\right]:X\longrightarrow \bigoplus_{i=1}^kX^\prime_i.$$
It is obvious that for any $i\in\{1,\dots,k\}$, $\hat{h}_i\in\rad^{d}_{\mcc}(X,X^\prime_i)$. The morphism $\hat{h}$ belongs to $\mathcal{A}$, so $F(\hat{h})(x)\neq0$. Therefore, there exists $j\in\{1,\dots,k\}$ such that $F(\hat{h}_j)(x)\neq0$. We claim that $\hat{h}_j:X\to X^\prime_j$ is a desired morphism. Indeed, we claim that $F(\hat{h}_j)(x)$ is strongly minimal in $F(X^\prime_j)$. Assume that $g:X^\prime_j\to X^{\prime\prime}$ is not a monomorphism. If $F(g)F(\hat{h}_j)(x)\neq0$, then $g\hat{h}_j\in\mathcal{A}$, since $g\hat{h}_j$ is not a monomorphism. $g$ is not a section, so by Lemma \ref{lemrad}, $g\in\rad_{\mcc}(X^\prime_j,X^{\prime\prime})$. On the other hands, $\hat{h}_j\in\rad^{d}_{\mcc}(X,X^\prime_j)$, therefore $g\hat{h}_j\in\rad^{d+1}_{\mcc}(X,X^{\prime\prime})$. But this contradicts the maximality of $d$. Hence, $F(g)F(\hat{h}_j)(x)=0$ and the claim is proved.
\end{proof}
Now, we are ready to prove the second main result in this section.
 \begin{theorem}\label{th11}
Let $\mcc$ be a functorially finite subcategory of $\modd\Lambda$ which contains $\Lambda$ or $D\Lambda$. Then $\mcc$ is of finite representation type if and only if the following
conditions hold.
\begin{itemize}
\item[(i)]
Condition \ref{c1} is satisfied.
\item[(ii)]
Any family of monomorphisms between indecomposable objects in $\mathcal{C}$ is noetherian.
\end{itemize}
\end{theorem}
\begin{proof}
The result follows from Propositions \ref{pro3}, \ref{pro5} and \ref{pro7}.
\end{proof}
\begin{remark}
By Lemma \ref{lem0} and Proposition \ref{pro2}, Condition \ref{c1} is always satisfied for $\modd\Lambda$. Therefore, Theorem \ref{th11} implies that any family of monomorphisms (resp. epimorphisms) between indecomposable $\Lambda$-modules is noetherian (resp. conoetherian) if and only if $\modd\Lambda$ is of finite representation type. This result have been already proved by Auslander (see \cite[Theorem 3.1]{Au2} and \cite[Theorem A]{AuL}).
\end{remark}
Now, as a consequence of Theorem \ref{main}, we have the following corollary.
\begin{corollary}
The following are equivalent for a functorially finite subcategory $\mcc$ of $\modd\Lambda$ which contains $\Lambda$ and $D\Lambda$.
\begin{itemize}
\item[(a)]
$\mcc$ is of finite representation type.
\item[(b)]
There exists $t\in\mathbb{N}$ such that for any indecomposable projective $\Lambda$-module $P$, $\rad^t_{\mcc}(P,-)=0$.
\item[$(\mathrm{b'})$]
There exists $t\in\mathbb{N}$ such that for any indecomposable injective $\Lambda$-module $I$,\\ $\rad^t_{\mcc}(-,I)=0$.
\item[(c)]
For any indecomposable projective $\Lambda$-module $P$, there exists $t\in\mathbb{N}$ such that $\rad^t_{\mcc}(P,-)=0$.
\item[$(\mathrm{c'})$]
For any indecomposable injective $\Lambda$-module $I$, there exists $t\in\mathbb{N}$ such that\\ $\rad^t_{\mcc}(-,I)=0$.
\end{itemize}
\end{corollary}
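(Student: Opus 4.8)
The plan is to derive everything from Theorem~\ref{main} together with the two elementary observations that $\mathrm{proj}(\Lambda)\subseteq\mcm$ and $\mathrm{inj}(\Lambda)\subseteq\mcm$, and that any module in $\mcm$ can be reached from a projective in $\mcm$ by an epimorphism, respectively mapped into an injective in $\mcm$ by a monomorphism.

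First I would dispatch the easy implications. For $(a)\Rightarrow(b)$ and $(a)\Rightarrow(b')$: by the equivalence of $(d)$ with $(a)$ and $(a')$ in Theorem~\ref{main}, finite type yields a single $t\in\mathbb{N}$ with $\rad^{t}_{\mcm}(X,-)=0$ (resp. $\rad^{t}_{\mcm}(-,X)=0$) for every $X\in\ind\mcm$; since indecomposable projective (resp. injective) modules lie in $\ind\mcm$, this is exactly $(b)$ (resp. $(b')$). The implications $(b)\Rightarrow(c)$ and $(b')\Rightarrow(c')$ are trivial.

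The substantive step is $(c)\Rightarrow(a)$, and the idea is to verify condition $(b)$ of Theorem~\ref{main}. Assume $(c)$. Since $\Lambda$ is an artin algebra there are only finitely many indecomposable projective modules $P_1,\dots,P_r$ up to isomorphism; choose $t_i$ with $\rad^{t_i}_{\mcm}(P_i,-)=0$ and set $t=\max_i t_i$, so that $\rad^{t}_{\mcm}(P,-)=0$ for every indecomposable projective $P$ (using $\rad^{t}_{\mcm}\subseteq\rad^{t_i}_{\mcm}$ and invariance of the radical under isomorphism). Now take any $X\in\ind\mcm$ with projective cover $p\colon P\to X$; as $P\in\mathrm{proj}(\Lambda)\subseteq\mcm$, writing $P=\bigoplus_j Q_j$ into indecomposable (projective) summands and using that $\rad^{t}_{\mcm}$ is a two-sided ideal, the composite of any $g\in\rad^{t}_{\mcm}(P,Y)$ with the inclusion $Q_j\hookrightarrow P$ lies in $\rad^{t}_{\mcm}(Q_j,Y)=0$, so $g=0$ and hence $\rad^{t}_{\mcm}(P,-)=0$. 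Finally, for $f\in\rad^{t}_{\mcm}(X,Y)$ with $Y\in\mcm$ the ideal property gives $fp\in\rad^{t}_{\mcm}(P,Y)=0$, and since $p$ is an epimorphism, $f=0$; thus $\rad^{t}_{\mcm}(X,-)=0$ for all $X\in\ind\mcm$, which is condition $(b)$ of Theorem~\ref{main}, so $(\Lambda,\mcm)$ is of finite type.

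The implication $(c')\Rightarrow(a)$ is entirely dual: one replaces projective covers by injective envelopes $X\to I$ with $I\in\mathrm{inj}(\Lambda)\subseteq\mcm$, precomposition with $p$ by postcomposition with the monomorphism $X\to I$, and concludes $\rad^{t}_{\mcm}(-,X)=0$ for all $X\in\ind\mcm$, i.e. condition $(b')$ of Theorem~\ref{main}. I do not anticipate a genuine obstacle here; the only points needing a moment of care are that the projective cover of an indecomposable module need not itself be indecomposable (handled by passing to indecomposable summands via the ideal property) and that one must invoke $\mathrm{proj}(\Lambda),\mathrm{inj}(\Lambda)\subseteq\mcm$ so that the covers and envelopes stay inside $\mcm$, where the radical of $\mcm$ is being computed.
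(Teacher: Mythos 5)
Your proof is correct and uses the same essential mechanism as the paper: reduce to Theorem~\ref{main} and transfer radical vanishing through the projective cover (an epimorphism from a module of $\mathrm{proj}(\Lambda)\subseteq\mcm$), respectively the injective envelope. The only difference is presentational: the paper runs $(c)\Rightarrow(a)$ in the contrapositive, pulling a non-noetherian chain back along $\pi_{X_0}$, whereas you argue directly via the ideal property to verify the uniform vanishing condition of Theorem~\ref{main}; both are sound.
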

\begin{proof}
$(\mathrm{a})\Rightarrow(\mathrm{b})$ It follows by the part $(\mathrm{a})$ of Theorem \ref{main}.

$(\mathrm{b})\Rightarrow(\mathrm{c})$ It is clear.

$(\mathrm{c})\Rightarrow(\mathrm{a})$ Assume that $\mcc$ is not of finite representation type. By Theorem \ref{main}, $\rad^\infty_{\mcc}\neq0$ and so there exist two $\Lambda$-modules $X,Y\in\mcc$ and a nonzero morphism $f\in\rad^\infty_{\mcc}(X,Y)$. Consider a projective cover $\pi_X:P_X\longrightarrow X$ of $X$. It is obvious that $f\pi_X$ is nonzero and lies in $\rad^\infty_{\mcc}(P_X,Y)$ and there is an indecomposable direct summand $P$ of $P_X$ such that $\rad^\infty_{\mcc}(P,Y)\neq0$. So $\rad^t_{\mcc}(P,Y)\neq0$, for any $t\in\mathbb{N}$. But this contradicts the hypothesis. Therefore, $\mcc$ is of finite representation type.
Dually, we can see that the implications $(\mathrm{a}), (\mathrm{b'})$ and $(\mathrm{c'})$ are equivalent.
\end{proof}

For showing another main theorem in this section, first we prove the following proposition that gives a description of morphisms in $\rad_{\mcc}^d(A,B)$ for two indecomposable $\Lambda$-modules $A$ and $B$ in an idempotent complete subcategory $\mcc$ of $\modd\Lambda$ and a positive integer $d\geq 2$ in terms of irreducible morphisms in $\mcc$.
\begin{proposition}\label{pro-rad}
Let $\mcc$ be an idempotent complete subcategory of $\modd\Lambda$. Assume that $A,B\in\ind\mcc$ and $f\in\rad^d_{\mcc}(A,B)$ with $d\geq2$. Then we have the following statements.
\begin{itemize}
\item[(a)]
If $\mcc$ has minimal right almost split
morphisms then there exist $s\in\mathbb{N}$, $\Lambda$-modules $X_1,\dots,X_s\in\ind\mcc$, morphisms $h_i\in\rad_{\mcc}(A,X_i)$ and morphisms $g_i:X_i\longrightarrow B$, where $g_i$ is a sum of compositions of $d-1$ irreducible morphisms in $\mcc$ between indecomposable $\Lambda$-modules such that $f=\sum_{i=1}^s g_ih_i$.

Moreover, if $f\notin\rad^{d+1}_{\mcc}( A,B)$, then at least one of the $h_i$ is irreducible in $\mcc$ and $f$ can be written as $f=u+v$, where $u$ is a sum of compositions of $d$ irreducible morphisms in $\mcc$, $u\neq0$ and $v\in\rad_{\mcc}^{d+1}(A,B)$.
\item[(b)]
If $\mcc$ has minimal left almost split morphisms then there exist $s\in\mathbb{N}$, $\Lambda$-modules $X_1,\dots,X_s\in\ind\mcc$, morphisms $g_i\in\rad_{\mcc}(X_i,B)$ and morphisms $h_i:A\longrightarrow X_i$, where $h_i$ is a sum of compositions of $d-1$ irreducible morphisms in $\mcc$ between indecomposable $\Lambda$-modules such that $f=\sum_{i=1}^s g_ih_i$.

Moreover, if $f\notin\rad^{d+1}_{\mcc}( A,B)$, then at least one of the $g_i$ is irreducible in $\mcc$ and $f$ can be written as $f=u+v$, where $u$ is a sum of compositions of $d$ irreducible morphisms in $\mcc$, $u\neq0$ and $v\in\rad_{\mcc}^{d+1}(A,B)$.
\end{itemize}
\end{proposition}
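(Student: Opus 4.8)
The plan is to prove part~(a) by induction on $d$ and to obtain part~(b) by the dual argument, with minimal right almost split morphisms replaced by minimal left almost split ones (both exist in $\mcm$ by Remark~\ref{mlas}). Since an $n$-cluster tilting subcategory is closed under direct summands, $\mcm$ is idempotent complete, so Theorem~\ref{irr-as} is available and will be used to recognise components of almost split morphisms as irreducible.

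The core of the argument will be the following reduction. Fix a minimal right almost split morphism $g\colon E\longrightarrow B$ in $\mcm$ and a decomposition $E=\bigoplus_{i=1}^{r}E_i$ into indecomposables; by Theorem~\ref{irr-as}(a) each component $g_i\coloneqq g|_{E_i}\colon E_i\longrightarrow B$ is irreducible in $\mcm$. I would first show that for every $d\geq 2$
$$\rad^d_{\mcm}(A,B)=\bigl\{\,g\circ f'\ \big|\ f'\in\rad^{d-1}_{\mcm}(A,E)\,\bigr\}.$$
The inclusion $\supseteq$ is immediate, since $g\in\rad_{\mcm}(E,B)$ ($g$ is not a retraction and $B$ is indecomposable, so Lemma~\ref{lemrad} applies) and $\rad^{d-1}_{\mcm}$ is a two-sided ideal. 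For $\subseteq$, I would write a given $f\in\rad^d_{\mcm}(A,B)$ as a finite sum $f=\sum_k v_k u_k$ with $Y_k\in\ind\mcm$, $u_k\in\rad^{d-1}_{\mcm}(A,Y_k)$ and $v_k\in\rad_{\mcm}(Y_k,B)$, which is possible after decomposing the intermediate modules of the defining chains of $\rad^d_{\mcm}(A,B)$ into indecomposable summands. Since $B$ is indecomposable each $v_k$ is not a retraction, hence factors as $v_k=g w_k$ for some $w_k\colon Y_k\longrightarrow E$; then $f=g\bigl(\sum_k w_k u_k\bigr)$ and $\sum_k w_k u_k\in\rad^{d-1}_{\mcm}(A,E)$ because $\rad^{d-1}_{\mcm}$ is an ideal.

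Granting this reduction, part~(a) follows by induction on $d$. For $d=2$ the reduction gives $f=\sum_i g_i h_i$ with $X_i\coloneqq E_i\in\ind\mcm$ and $h_i\in\rad_{\mcm}(A,E_i)$ (the $i$-th component of $f'$), the $g_i$ being irreducible, that is, compositions of $d-1=1$ irreducible morphisms. For $d\geq 3$ the reduction gives $f=\sum_i g_i\tilde f_i$ with $\tilde f_i\in\rad^{d-1}_{\mcm}(A,E_i)$; applying the inductive hypothesis to each $\tilde f_i$ writes it as $\sum_j g_{ij}h_{ij}$ with $h_{ij}\in\rad_{\mcm}(A,X_{ij})$, $X_{ij}\in\ind\mcm$ and $g_{ij}\colon X_{ij}\to E_i$ a sum of compositions of $d-2$ irreducible morphisms, so that $f=\sum_{i,j}(g_i g_{ij})h_{ij}$ and each $g_i g_{ij}\colon X_{ij}\to B$ is a sum of compositions of $d-1$ irreducible morphisms (compose each summand of $g_{ij}$ with the irreducible $g_i$); reindexing the pairs $(i,j)$ completes the step.

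For the ``moreover'' clause, suppose $f\notin\rad^{d+1}_{\mcm}(A,B)$ and take the expression $f=\sum_{i=1}^{s}g_i h_i$ just produced. Each $g_i$ lies in $\rad^{d-1}_{\mcm}(X_i,B)$, so $h_i\in\rad^2_{\mcm}(A,X_i)$ would force $g_i h_i\in\rad^{d+1}_{\mcm}(A,B)$; if this held for every $i$ we would get $f\in\rad^{d+1}_{\mcm}(A,B)$, a contradiction, so some $h_i$ lies in $\rad_{\mcm}(A,X_i)\setminus\rad^2_{\mcm}(A,X_i)$ and is therefore irreducible by Lemma~\ref{irr-rad}. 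Splitting the sum according to whether $h_i\in\rad^2_{\mcm}(A,X_i)$, one sets $v\coloneqq\sum_{h_i\in\rad^2_{\mcm}}g_i h_i\in\rad^{d+1}_{\mcm}(A,B)$ and $u\coloneqq\sum_{h_i\notin\rad^2_{\mcm}}g_i h_i$, which is a sum of compositions of $d$ irreducible morphisms in $\mcm$ between indecomposables, and notes that $u\neq0$ since otherwise $f=v\in\rad^{d+1}_{\mcm}(A,B)$. The main obstacle is the bookkeeping: keeping every intermediate module indecomposable while tracking exactly which radical power each factor lies in. The reduction lemma is what makes this manageable, and it relies essentially on the existence of minimal right almost split morphisms in $\mcm$ (Remark~\ref{mlas}) together with the idempotent completeness of $\mcm$. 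Part~(b) is proved in the same way using minimal left almost split morphisms.
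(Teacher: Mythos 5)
Your proof is correct and uses the same basic ingredients as the paper's (induction on $d$, the minimal right almost split morphisms of Remark \ref{mlas}, Theorem \ref{irr-as} to recognise irreducibles, and Lemma \ref{irr-rad} for the ``moreover'' clause), but the induction is organised differently, and the difference is worth recording. The paper's step for $d\geq3$ factors $f=\alpha'\alpha$ with the radical morphism $\alpha\in\rad_{\mcm}(A,Y)$ peeled off at the \emph{source} and $\alpha'\in\rad^{d-1}_{\mcm}(Y,B)$; the inductive hypothesis applied to $\alpha'$ produces irreducibles near $B$, and a second, nested application of the $d=2$ case to the compositions $\beta_{ij}\alpha_i\in\rad^{2}_{\mcm}(A,Z_{ij})$ is then needed to push the remaining radical factor back to $A$. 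You instead isolate the reduction $\rad^{d}_{\mcm}(A,B)=g\circ\rad^{d-1}_{\mcm}(A,E)$ for the minimal right almost split morphism $g:E\longrightarrow B$, peel $g$ off at the \emph{target}, and induct on the $\rad^{d-1}$ part in a single pass; this avoids the nested use of the base case and makes it transparent which factor lies in which power of the radical. A further small advantage of your route: in the base case the paper lifts $f$ through $g$ and asserts that the components $h_i$ of that particular lift lie in $\rad_{\mcm}(A,X_i)$ ``since $f$ is not irreducible'', a step that in fact requires an argument using the right minimality of $g$; your reduction lemma produces a lift lying in $\rad_{\mcm}(A,E)$ by construction, so no such verification is needed. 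The treatment of the ``moreover'' clause --- splitting the sum according to whether $h_i\in\rad^{2}_{\mcm}(A,X_i)$ --- is the same in both proofs, as is the dualisation for part~(b).
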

\begin{proof}
By induction on $d$, we only prove $($a$)$. The proof of $($b$)$ is similar.

First, assume that $d=2$. By assumption, there exists a minimal right almost split morphism $g:X\longrightarrow B$ in $\mcc$.
Since $f$ lies in $\rad_{\mcc}(A,B)$, by Lemma \ref{lemrad}, it is not a retraction. Therefore, there exists a morphism $h:A\longrightarrow X$ such that the following diagram commutes.
\begin{center}
\scalebox{.75}{
\begin{tikzpicture}
\node (X1) at (0,0) {$A$};
\node (X2) at (4,0) {$B$.};
\node (X3) at (2,-2) {$X$};
\draw [->,thick] (X1) -- (X2) node [midway,above] {$f$};
\draw [->,thick,dashed] (X1) -- (X3) node [midway,left] {$h\,\, $};
\draw [->,thick] (X3) -- (X2) node [midway,right] {$g$};
\end{tikzpicture}}
\end{center}
Consider the decomposition $X=\bigoplus_{i=1}^sX_i$ of $X$ into indecomposable direct summands. Since $\mcc$ is closed under direct summands, each $X_i$ belongs to $\mcc$. We have $f=\sum_{i=1}^s g_ih_i$ where $g=(g_1,\dots,g_s)$ and $h=(h_1,\dots,h_s)^t$. Theorem \ref{irr-as} implies that $g_i$ is irreducible in $\mcc$, for each $1\leq i\leq s$. By assumption, $f\in\rad_{\mcc}^{2}(A,B)$ and so by Lemma \ref{irr-rad}, $f$ is not irreducible in $\mcc$. Therefore, $h_i\in\rad_{\mcc}(A,X_i)$, for each $1\leq i\leq s$. If $f\notin\rad_{\mcc}^{3}(A,B)$, then there exists at least one $i\in\{1,\dots,s\}$ such that $h_i$ does not belong to $\rad_{\mcc}^2(A,X_i)$. By Lemma \ref{irr-rad}, $h_i$ is irreducible in $\mcc$.

Now, suppose that $d\geq 3$ and the result follows for $d-1$. Let $f\in\rad^d_{\mcc}(A,B)$. There exist a $\Lambda$-module $Y$ in $\mcc$, morphisms $\alpha\in\rad_{\mcc}(A,Y)$ and $\alpha^\prime\in\rad^{d-1}_{\mcc}( Y,B)$ such that $f=\alpha^\prime \alpha$. Consider  the decomposition $Y=\bigoplus_{i=1}^r Y_i$ of $Y$ into indecomposable direct summands. Assume that $\alpha^\prime_i$ is the restriction of $\alpha^\prime$ to $Y_i$ and $\alpha_i$ is the morphism induced by the decomposition $Y=\bigoplus_{i=1}^r Y_i$. We have $f=\sum_{i=1}^r \alpha^\prime_i\alpha_i$ .
It is clear that $\alpha^\prime_i\in\rad^{d-1}_{\mcc}(Y_i,B)$, for each $i\in\{1,\dots,r\}$. Now, we apply the induction hypothesis for each $i$ and we obtain $t_i\in\mathbb{N}$, $\Lambda$-modules $Z_{i1},\dots, Z_{it_i}\in\ind\mcc$, morphisms $\beta_{ij}\in\rad_{\mcc}(Y_i,Z_{ij})$ and morphisms $\beta^\prime_{ij}:Z_{ij}\longrightarrow B$ with each $\beta^\prime_{ij}$ a sum of compositions of $d-2$ irreducible morphisms in $\mcc$ between indecomposable $\Lambda$-modules such that $\alpha^\prime_i=\sum_{j=1}^{t_i}\beta^\prime_{ij}\beta_{ij}$, for each $i\in\{1,\dots,r\}$. Since for each $i,j$, the morphism $\beta_{ij}\alpha_i:A\longrightarrow Z_{ij}$ belongs to $\rad_{\mcc}^2(A,Z_{ij})$, by the induction hypothesis, we have $\beta_{ij}\alpha_i=\sum_{k=1}^{l_{ij}}\gamma^\prime_{ijk}\gamma_{ijk}$, where $l_{ij}\geq1$ is a natural number, $\gamma^\prime_{ijk}:W_{ijk}\longrightarrow Z_{ij}$ is an irreducible morphism in $\mcc$ between indecomposable $\Lambda$-modules and $\gamma_{ijk}\in\rad_{\mcc}(A,W_{ijk})$, for each $k$. Therefore, we have
$$f=\alpha^\prime \alpha=\sum_{i=1}^r \alpha^\prime_i\alpha_i=\sum_{i=1}^r(\sum_{j=1}^{t_i}\beta^\prime_{ij}\beta_{ij})\alpha_i=\sum_{i=1}^r\sum_{j=1}^{t_i}\beta^\prime_{ij}(\beta_{ij}\alpha)=\sum_{i=1}^r\sum_{j=1}^{t_i}\sum_{k=1}^{l_{ij}}\beta^\prime_{ij}\gamma^\prime_{ijk}\gamma_{ijk},$$
that $W_{ijk}\in\ind\mcc$, $\gamma_{ijk}\in\rad_{\mcc}(A,W_{ijk})$ and each $\beta^\prime_{ij}\gamma^\prime_{ijk}:W_{ijk}\longrightarrow B$ is a sum of compositions of $d-1$ irreducible morphisms in $\mcc$, for any $i, j, k$. Now, assume that $f$ does not belong to $\rad_{\mcc}^{d+1}(A,B)$. Therefore, at least one of the $\gamma_{ijk}$ does not belong to $\rad_{\mcc}^2(A,W_{ijk})$ and so by Lemma \ref{irr-rad}, it is irreducible and the result follows.
\end{proof}

Now, we are ready to prove the final main theorem of this section.

\begin{theorem}\label{sumirr}
Let $\mcc$ be a functorially finite subcategory of $\modd\Lambda$ which contains $\Lambda$ or $D\Lambda$ and $f\in\rad_{\mcc}( A,B)$ be a nonzero morphism with $A,B\in\ind\mcc$. If $\mcc$ is of finite representation type, then $f$ is a sum of compositions of irreducible morphisms in $\mcc$ between indecomposable $\Lambda$-modules.
\end{theorem}
\begin{proof}
By Remark \ref{mlra}, there exist minimal right almost split morphisms in $\mcc$ and so the conditions of Proposition \ref{pro-rad} are satisfied. We know that there exists $l\in\mathbb{N}$ such that $\rad_{\mcc}^\infty(A,B)=\rad_{\mcc}^l(A,B)$ (see the part $(\mathrm{v})\Rightarrow(\mathrm{ii})$ of the proof of Theorem \ref{main}).
Since $\mcc$ is of finite representation type, we conclude from Theorem \ref{main} that $\rad_{\mcc}^\infty=0$ and so $\rad_{\mcc}^l(A,B)=0$. Therefore, there exists $t< l$ such that $f\in\rad_{\mcc}^t(A,B)\backslash\rad_{\mcc}^{t+1}(A,B)$. If $t=1$, the result follows by Lemma \ref{irr-rad}. Assume that $t\geq2$. By Proposition \ref{pro-rad}, we have $f=u_0+v_1$, where $u_0$ is a sum of compositions of irreducible morphisms in $\mcc$ between indecomposable $\Lambda$-modules and $v_1\in\rad_{\mcc}^{t+1}(A,B)$. If $v_1$ is nonzero, then we repeat the same procedure with $v_1$ and obtain $v_1=u_1+v_2$, where $u_1$ is the sum of compositions of irreducible morphisms in $\mcc$ between indecomposable $\Lambda$-modules and $v_2\in\rad_{\mcc}^{s+1}(A,B)$ with $s>t$. Therefore, we can write $f=u_0+u_1+v_2$. If $v_2$ is nonzero, we repeat this procedure again. This procedure stops after finitely many times because $\rad_{\mcc}^l(A,B)=0$. Finally, we obtain that $f$ is a sum of compositions of irreducible morphisms in $\mcc$ between indecomposable $\Lambda$-modules and the result follows.
\end{proof}

\section{The depth of morphisms and the representation type of functorially finite subcategories}
In this section, we provide other two criteria for recognizing the finiteness of the representation type of a functorially finite subcategory $\mcc$ of $\modd\Lambda$ which contains $\Lambda$ or $D\Lambda$ in terms of the depth of some special morphisms in $\mcc$. Also, by using the depth of the finite number of special morphisms in $\mcc$, we give a nilpotency index of $\rad_{\mcc}$, which is independent from the maximal length of indecomposable $\Lambda$-modules in $\mcc$.

The depth of a morphism in $\modd\Lambda$ was defined by Chaio and Liu in \cite{Ch-Li}.
We can extend this concept to each morphism in an additive category.

\begin{definition}\label{def4.1}
Let $\mathcal{C}$ be an additive category and $f:X\rightarrow Y$ be a morphism in $\mathcal{C}$. We say that the depth of $f$ in $\mathcal{C}$ is infinite if $f\in\rad^\infty_\mathcal{C}(X,Y)$. Otherwise, there exists a unique integer $t\geq 0$ such that $f\in\rad^t_\mathcal{C}(X,Y)\backslash\rad^{t+1}_\mathcal{C}(X,Y)$. In this case, we say that the depth of $f$ in $\mathcal{C}$ is $t$. We denote the depth of $f$ in $\mathcal{C}$ by $\dep_\mathcal{C}(f)$.
\end{definition}
Let $\mcc$ be a functorially finite subcategory of $\modd\Lambda$. Throughout this section, we fix the right $\mcc$-approximation $r_S:M_S^r\rightarrow S$, the left $\mcc$-approximation $l_S:S\rightarrow M_S^l$, the projective cover $\pi_S:P_S\rightarrow S$ and the injective envelope $\iota_S:S\rightarrow I_S$ for each simple $\Lambda$-module $S$.
Also, we denote the composition $\iota_S\pi_S$ by $\theta_S$, for every simple $\Lambda$-module $S$.

We recall that the (Jacobson) radical $\rad M$ of the $\Lambda$-module $M$ is the intersection of all the maximal submodules of $M$ and for the nonzero $\Lambda$-module $M$, $\mathrm{top}M\coloneqq M/\rad M$ is a semisimple $\Lambda$-module.

Before the proof of the first main theorem in this section, we prove the following easy lemma.
\begin{lemma}\label{lem}
Let $\mcc$ be a subcategory of $\modd\Lambda$ and $f\in\rad^\infty_{\mcc}(X,Y)$ for two $\Lambda$-modules $X,Y\in\mcc$. Then, for any $\Lambda$-module $Z\in\mcc$ and any nonzero morphism $g:\im f{\longrightarrow} Z$, the composition of morphisms $X\overset{f}{\longrightarrow} \im f \overset{g}{\longrightarrow}Z$
belongs to $\rad^\infty_{\mcc}(X,Z)$.
 \end{lemma}
 \begin{proof}
We show that $gf\in\rad^i_{\mcc}(X,Z)$, for any $i\in\mathbb{N}$. Fix $t\in\mathbb{N}$ and put $X_0\coloneqq X$ and $X_{t+1}\coloneqq Y$. Since $f:X\to Y$ belongs to $\rad^\infty_{\mcc}(X,Y)$, $f$ belongs to $\rad^{t+1}_{\mcc}(X,Y)$. Therefore, there exist $\Lambda$-modules $X_1,\dots,X_t\in\mcc$ and morphisms $h_i\in\rad_{\mcc}(X_{i-1},X_i)$, for $i=1,\dots,t+1$ such that $f=h_{t+1}\dots h_1$. Indeed, we have the following factorization.
\begin{center}
\scalebox{.8}{
\begin{tikzpicture}
  \node (X1) {$X_1$};
  \node (X2) [node distance=2cm, right of=X1] {$X_2$};
  \node (dot) [node distance=2cm, right of=X2] {$\cdots$};
  \node (xt1) [node distance=2cm, right of=dot] {$X_{t-1}$};
    \node (xt) [node distance=2cm, right of=xt1] {$X_t$};
  \node (X) [node distance=1.4cm, left of=X1, above of=X1] {$X_0=X$};
   \node (Y) [node distance=1.4cm, right of=xt, above of=xt] {$X_{t+1}=Y$};
     \draw[->] (X1) to node [below] {$h_2$} (X2);
       \draw[->] (X2) to node {} (dot);
         \draw[->] (dot) to node {} (xt1);
           \draw[->] (xt1) to node [below] {$h_t$} (xt);
             \draw[->] (X) to node [left] {$h_1$} (X1);
               \draw[->] (xt) to node [right] {$\,h_{t+1}$} (Y);
  \draw[->] (X) to node [above] {$f$} (Y);
\end{tikzpicture}}
\end{center}

It is clear that $\im f=\im (h_{t+1}\dots h_1)$ and we can consider the composition of morphisms
 \begin{equation}
X= X_0 \overset{h_1}{\longrightarrow} X_1 \overset{h_2}{\longrightarrow}X_2\longrightarrow\cdots \longrightarrow X_{t-1} \overset{h_t}{\longrightarrow} X_t \overset{h_{t+1}}{\longrightarrow}\im (h_{t+1}\dots h_1)\overset{g}{\longrightarrow} Z. \notag
\end{equation}
It is obvious that $h_t\dots h_1\in\rad^t_{\mcc}(X_1,X_t)$ and $gh_{t+1}\in\Hom_{\mcc}(X_t,Z)$. Hence, $gf=gh_{t+1}\dots h_1\in\rad^t_{\mcc}(X,Z)$.
 \end{proof}
\begin{theorem}\label{dp1}
Let $\mcc$ be a functorially finite subcategory of $\modd\Lambda$.
\begin{itemize}
\item[(a)]
 If $\mcc$ contains $\Lambda$, then the following are equivalent.
 \begin{itemize}
\item[(i)]
$\mcc$ is of finite representation type.
\item[(ii)]
The depth of the composition $l_S\pi_S$ in $\mcc$ is finite for every simple $\Lambda$-module $S$.
\item[(iii)]
For any indecomposable projective $\Lambda$-module $P$, $\rad^\infty_{\mcc}(P,-)=0$.
\end{itemize}
\item[(b)]
If $\mcc$ contains $D\Lambda$, then the following are equivalent.
\begin{itemize}
\item[(i)]
$\mcc$ is of finite representation type.
\item[(ii)]
The depth of the composition $\iota_Sr_S$ in $\mcc$ is finite for every simple $\Lambda$-module $S$.
\item[(iii)]
For any indecomposable injective $\Lambda$-module $I$, $\rad^\infty_{\mcc}(-,I)=0$.
\end{itemize}
\end{itemize}
\end{theorem}
\begin{proof}
We only prove $(\mathrm{a})$, because the proof of $(\mathrm{b})$ is dual.

$(\mathrm{i})\Rightarrow(\mathrm{ii})$
Assume that $\mcc$ is of finite representation type. By Theorem \ref{main}(b), $\rad^\infty_{\mcc}=0$. We know that for every simple $\Lambda$-module $S$, $\pi_S$ is an epimorphism and $l_S$ is nonzero. So $l_S\pi_S$ is nonzero and does not lie in $\rad^\infty_{\mcc}$. Therefore, by the definition of the depth of a morphism in $\mcc$, $\dep_{\mcc}(l_S\pi_S)$ is finite.

$(\mathrm{ii})\Rightarrow(\mathrm{iii})$ Assume that there is an indecomposable projective $\Lambda$-module $P$ such that $\rad^\infty_{\mcc}(P,-)\neq0$. Therefore, there exist $M\in\mcc$ and a nonzero morphism $f\in\rad^\infty_{\mcc}(P,M)$. Suppose that $S$ is a simple summand of $\mathrm{top}(\im f)$. Then there is a nonzero epimorphism $\pi:\im f\to S$. Consider the following diagram.

\begin{center}
\scalebox{.8}{
\begin{tikzpicture}
\node (X1) at (0,3) {$P_S$};
\node (X2) at (0,1) {$S$};
\node (X3) at (-3,1) {$\im f$};
\node (X4) at (-6,1) {$P$};
\node (X5) at (0,-1) {$M^l_S$};
\draw [->,thick] (X1) -- (X2) node [midway,right] {$\pi_S$};
\draw [->,thick] (X3) -- (X2) node [midway,below] {$\pi$};
\draw [->,thick] (X4) -- (X3) node [midway,below] {$f$};
\draw [->,thick] (X2) -- (X5) node [midway,right] {$l_S$};
\end{tikzpicture}}
\end{center}
Since $\pi f$ is an epimorphism and  $P_S$ is a projective module, we have the following commutative diagram:
\begin{center}
\scalebox{.8}{
\begin{tikzpicture}
\node (X1) at (0,3) {$P_S$};
\node (X2) at (0,1) {$S$};
\node (X3) at (-3,1) {$\im f$};
\node (X4) at (-6,1) {$P$};
\node (X5) at (0,-1) {$M^l_S$};
\draw [->,thick] (X1) -- (X2) node [midway,right] {$\pi_S$};
\draw [->,thick] (X3) -- (X2) node [midway,below] {$\pi$};
\draw [->,thick] (X4) -- (X3) node [midway,below] {$f$};
\draw [->,thick] (X2) -- (X5) node [midway,right] {$l_S$};
\draw [->,thick,dashed] (X1) -- (X4) node [midway,above] {$ h$};
\end{tikzpicture}}
\end{center}
Indeed, there exists a morphism $h$ such that $\pi_S=\pi fh$ and so we obtain $l_S\pi_S=l_S\pi fh$. By Lemma \ref{lem}, $l_S\pi f\in\rad^\infty_{\mcc}(P,M_S^l)$ and so $l_S\pi_S=l_S\pi fh\in\rad^\infty_{\mcc}(P_S,M_S^l)$. This means that $\dep_{\mcc}(l_S\pi_S)$ is infinite  and this is a contradiction.

$(\mathrm{iii})\Rightarrow(\mathrm{i})$ Assume that $\mcc$ is not of finite representation type. By Theorem \ref{main}(b), $\rad^\infty_{\mcc}\neq0$ and so there exist two $\Lambda$-module $X,Y\in\mcc$ and a nonzero morphism $f\in\rad^\infty_{\mcc}(X,Y)$. Consider a projective cover $\pi_X:P_X\rightarrow X$ of $X$. It is obvious that $f\pi_X$ is nonzero and lies in $\rad^\infty_{\mcc}(P_X,Y)$ and there is an indecomposable direct summand $P$ of $P_X$ such that  $\rad^\infty_{\mcc}(P,Y)\neq0$. But this contradicts the hypothesis. Therefore, $\mcc$ is of finite representation type.
\end{proof}
\begin{lemma}\label{lem01}
Let $\mcc$ be a functorially finite subcategory of $\modd\Lambda$ which contains $\Lambda$ and $D\Lambda$. Then $\mcc$ is of finite representation type if and only if there exists an integer $d\geq1$ such that $\rad_{\mcc}^d(P_S,I_S)=0$, for any simple $\Lambda$-module $S$.
\end{lemma}
\begin{proof}
Suppose that $\mcc$ is of finite representation type. Then by the proof of Theorem \ref{main}, for the Harada-Sai bound $2^b-1$ we have $\rad_{\mcc}^{2^b-1}=0$ and the proof is complete.

Conversely, assume that there exists an integer $d\geq1$ such that $\rad_{\mcc}^d(P_S,I_S)=0$, for any simple $\Lambda$-module $S$ and $\mcc$ is not of finite representation type. By Theorem \ref{main}, $\rad_{\mcc}^\infty\neq0$. Thus, there exist two $\Lambda$-modules $M$ and $N$ in $\mcc$ such that $\rad_{\mcc}^\infty(M,N)$ is nonzero and consequently $\rad_{\mcc}^d(M,N)$ is nonzero. Now, consider the nonzero morphism $f:M\rightarrow N$ in $\rad_{\mcc}^d(M,N)$ and let $S$ be a simple composition factor of $\im(f)$. By \cite[Proposition 2.4]{Ch-Li}, there exist morphisms $g:P_S\rightarrow M$ and $h:N\rightarrow I_S$ such that $hfg=\theta_S$ and hence the nonzero morphism $\theta_S$ belongs to $\rad_{\mcc}^d(P_S,I_S)$, which is a contradiction.
\end{proof}

Now, by using the depth of $\theta_S$ for every simple $\Lambda$-module $S$, we give another criterion for recognizing the finiteness of the representation type of the functorially finite subcategory $\mcc$ of $\modd\Lambda$ and find a good nilpotency index of the radical of $\mcc$.

\begin{theorem}\label{dp2}
Let $\mcc$ be a functorially finite subcategory of $\modd\Lambda$ which contains $\Lambda$ and $D\Lambda$. Then $\mcc$ is of finite representation type if and only if the depth of $\theta_S$ in $\mcc$ is finite, for every simple $\Lambda$-module $S$. Moreover, in this case, if $m$ is the maximal depth of the morphisms $\theta_S$ in $\mcc$ with $S$ ranging over all simple $\Lambda$-modules, then $m+1$ is a nilpotency index of $\rad_{\mcc}$.
\end{theorem}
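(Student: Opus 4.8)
The plan is to prove both implications simultaneously, since establishing $\rad^{m+1}_{\mcm}=0$ under the depth hypothesis already forces finite type via Theorem \ref{main}, and it also yields the quantitative statement.

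First I would dispatch the easy direction. If $(\Lambda,\mcm)$ is of finite type, Theorem \ref{main} gives $\rad^\infty_{\mcm}=0$. Since $\pi_S$ is an epimorphism and $\iota_S$ a monomorphism, the composition $\theta_S=\iota_S\pi_S\colon P_S\to I_S$ (which lives in $\mcm$ because $\mathrm{proj}(\Lambda),\mathrm{inj}(\Lambda)\subseteq\mcm$) has image $\iota_S(S)\neq0$ and hence is nonzero; therefore $\theta_S\notin\rad^\infty_{\mcm}(P_S,I_S)$, so $\dep_{\mcm}(\theta_S)<\infty$ for every simple $\Lambda$-module $S$.

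For the converse together with the nilpotency bound, assume every $\dep_{\mcm}(\theta_S)$ is finite. Because $\Lambda$ is an artin algebra there are only finitely many simple modules up to isomorphism, so $m\coloneqq\max_S\dep_{\mcm}(\theta_S)$ is a well-defined non-negative integer. The key claim is $\rad^{m+1}_{\mcm}=0$, and I would prove it by contradiction: if some $f\in\rad^{m+1}_{\mcm}(M,N)$ were nonzero with $M,N\in\mcm$, choose a simple composition factor $S$ of $\im(f)$ and invoke \cite[Proposition 2.4]{Ch-Li} exactly as in the proof of Lemma \ref{lem01} to obtain $g\colon P_S\to M$ and $h\colon N\to I_S$ with $hfg=\theta_S$. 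Since $P_S,I_S\in\mcm$ and $\rad^{m+1}_{\mcm}$ is a two-sided ideal, $\theta_S=hfg\in\rad^{m+1}_{\mcm}(P_S,I_S)$, contradicting $\dep_{\mcm}(\theta_S)\leq m$. Hence $\rad^{m+1}_{\mcm}=0$; in particular $\rad^\infty_{\mcm}=0$, so $(\Lambda,\mcm)$ is of finite type by Theorem \ref{main}, and $m+1\geq1$ is by definition a nilpotency index of $\rad_{\mcm}$. To see moreover that $m+1$ is the exact (minimal) nilpotency index, I would pick $S_0$ with $\dep_{\mcm}(\theta_{S_0})=m$: the nonzero morphism $\theta_{S_0}$ lies in $\rad^m_{\mcm}(P_{S_0},I_{S_0})$, so $\rad^m_{\mcm}\neq0$ and the index cannot be lowered.

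There is no hard computation here; the only point that needs care is that a single integer $m$, read off from the finitely many morphisms $\theta_S$, controls all radical powers $\rad^{m+1}_{\mcm}(M,N)$ uniformly in $M$ and $N$. This uniformity is exactly what the factorization $hfg=\theta_S$ through the fixed objects $P_S,I_S$ in \cite[Proposition 2.4]{Ch-Li} supplies, which is why reducing an arbitrary nonzero morphism in $\rad^{m+1}_{\mcm}$ to one of the $\theta_S$ via a composition factor of $\im(f)$ is the crucial step; everything else is formal manipulation of the radical ideal together with Theorem \ref{main}.
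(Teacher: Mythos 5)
Your proposal is correct and follows essentially the same route as the paper: the forward direction via Theorem \ref{main}, and the converse by reducing an arbitrary nonzero $f\in\rad^{m+1}_{\mcm}(M,N)$ to $\theta_S$ through a simple composition factor of $\im(f)$ and \cite[Proposition 2.4]{Ch-Li}. The only difference is cosmetic: you inline the argument of Lemma \ref{lem01} and prove $\rad^{m+1}_{\mcm}=0$ in one pass, whereas the paper first establishes $\rad^{m+1}_{\mcm}(P_S,I_S)=0$ separately (via \cite[Proposition 2.2]{Ch-Li}) before globalizing; your added observation that $\rad^{m}_{\mcm}\neq0$ is a small bonus beyond what the statement requires.
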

\begin{proof}
Assume that $\mcc$ is of finite representation type. By Theorem \ref{main}, $\rad_{\mcc}^\infty=0$.
We know that $\theta_S$ is nonzero, for every simple $\Lambda$-module $S$.
Therefore, $\theta_S$ does not belong to $\rad_{\mcc}^\infty$ and so $\dep_{\mcc}(\theta_S)$ is finite, for any simple $\Lambda$-module $S$.

Conversely, assume that $\dep_{\mcc}(\theta_S)$ is finite for any simple $\Lambda$-module $S$. Set
$$m\coloneqq\max\{\dep_{\mcc}(\theta_S)\,|\, S \text{\, is a simple\,\,}\Lambda\text{-module}\}.$$
We claim that $\rad_{\mcc}^{m+1}(P_S,I_S)=0$, for any simple $\Lambda$-module $S$. Assume on a contrary that there is a simple $\Lambda$-module $S^\prime$ such that $\rad_{\mcm}^{m+1}(P_{S^\prime},I_{S^\prime})$ is nonzero.
Consider a nonzero morphism $f:P_{S^\prime}\rightarrow I_{S^\prime}$ in $\rad_{\mcc}^{m+1}(P_{S^\prime},I_{S^\prime})$. By \cite[Proposition 2.2]{Ch-Li}, there exist morphisms $u\in\End_{\Lambda}(P_{S^\prime})$ and $v\in\End_{\Lambda}(I_{S^\prime})$ such that $vfu=\theta_{S^\prime}$ and so the morphism $\theta_{S^\prime}$ lies in $\rad_{\mcc}^{m+1}(P_{S^\prime},I_{S^\prime})$. But this is a contradiction since $\dep_{\mcc}(\theta_{S^\prime})\leq m$. Hence, our claim is proved and the result follows from Lemma \ref{lem01}.

Now, we show that in this case $m+1$ is a nilpotency index of $\rad_{\mcc}$.
In the above proof, we show that $\rad_{\mcc}^{m+1}(P_S,I_S)$ vanishes, for every simple $\Lambda$-module $S$.
Now, we claim that $\rad_{\mcc}^{m+1}(M,N)=0$, for $\Lambda$-modules $M$ and $N$ in $\mcc$.
Assume on a contrary that $\rad_{\mcc}^{m+1}(M,N)\neq0$ for $\Lambda$-modules $M$ and $N$ in $\mcc$ and consider a nonzero morphism $f:M\rightarrow N$ in $\rad_{\mcc}^{m+1}(M,N)$. Suppose that $S$ is a simple composition factor of $\im(f)$. By \cite[Proposition 2.4]{Ch-Li}, there exist morphisms $g:P_S\rightarrow M$ and $h:N\rightarrow I_S$ such that $hfg=\theta_S$ and so the morphism $\theta_S$ lies in $\rad_{\mcc}^{m+1}(P_S,I_S)$. But this is a contradiction since $\theta_S$ is nonzero. Thus $\rad_{\mcc}^{m+1}(M,N)=0$, for $\Lambda$-modules $M$ and $N$ in $\mcc$ and the result follows.
\end{proof}

\section{Examples}
In this section we give some examples of functorially finite subcategories which satisfy the conditions of the main results of this paper.
\subsection{$n$-cluster tilting subcategories}
Throughout this subsection, $n$ is a fixed positive integer. The $n$-cluster tilting subcategories are introduced and investigated by Iyama in \cite{I2, I3, I1}.
\begin{definition}$($\cite[Definition 2.2]{I1}$)$
A functorially finite full subcategory $\mathcal{M}$ of $\modd\Lambda$ is called \textit{$n$-cluster tilting} if
\begin{align*}
\mathcal{M}&=\{X\in\text{$\modd\Lambda$}\,|\,\mathrm{Ext}^i_\Lambda(X,\mathcal{M})=0, \,\,\, \text{for } 0<i<n\}\\
&=\{X\in\text{$\modd\Lambda$}\,|\,\mathrm{Ext}^i_\Lambda(\mathcal{M},X)=0, \,\,\, \text{for } 0<i<n\}.
\end{align*}
\end{definition}

Note that $\modd\Lambda$ itself is the unique $1$-cluster tilting subcategory of $\modd\Lambda$. It is obvious that $\mcm$ is closed under direct sums and summands and isomorphisms and it contains $\Lambda$ and $D\Lambda$.

\begin{definition}$($\cite[Definition 2.5]{HJV} and \cite[Definition 2.13]{EN}$)$\label{def-rad}
If $\mathcal{M}$ is an $n$-cluster tilting subcategory of $\modd\Lambda$, then the pair $(\Lambda,\mathcal{M})$ is called an {\em $n$-homological pair}. An $n$-homological pair $(\Lambda,\mathcal{M})$ is called of {\em finite type} if $\mathcal{M}$ has an additive generator or equivalently the number of isomorphism classes of indecomposable objects in $\mathcal{M}$ is finite.
\end{definition}

As a consequence of Theorem \ref{main}, we have the following corollary.
\begin{corollary}
Let $(\Lambda,\mcm)$ be an $n$-homological pair. Then the following are equivalent.
\begin{itemize}
\item[$(a)$]
There exists $t\in\mathbb{N}$ such that for any $X\in\ind\mcm$, $\rad^t_{\mcm}(X,-)=0$.
\item[$(a')$]
There exists $t\in\mathbb{N}$ such that for any $X\in\ind\mcm$, $\rad^t_{\mcm}(-,X)=0$.
\item[$(b)$]
For any $X\in\ind\mcm$, there exists $t\in\mathbb{N}$ such that $\rad^t_{\mcm}(X,-)=0$.
\item[$(b')$]
For any $X\in\ind\mcm$, there exists $t\in\mathbb{N}$ such that $\rad^t_{\mcm}(-,X)=0$.
\item[$(c)$]
Any family of morphisms between indecomposable $\Lambda$-modules in $\mcm$ is noetherian.
\item[$(c')$]
Any family of morphisms between indecomposable $\Lambda$-modules in $\mcm$ is conoetherian.
\item[$(d)$]
$(\Lambda,\mcm)$ is of finite type.
\item[$(e)$]
$\rad^\infty_{\mcm}=0$.
\end{itemize}
\end{corollary}

For $n=1$, according to the Auslander's works, we know that $\modd\Lambda$ is of finite representation type if and only if $\rad^\infty_{\Lambda}=0$ if and only if every family of monomorphisms (resp. epimorphisms) between indecomposable objects in $\modd\Lambda$ is noetherian (resp. conoetherian). Motivated by these results we pose the following question.

\begin{question}\label{q1}
Let $(\Lambda,\mcm)$ be an $n$-homological pair with $n\geq2$. Assume that every family of monomorphisms (resp. epimorphisms) between indecomposable $\Lambda$-modules in $\mcm$ is noetherian (resp. conoetherian). Is the $n$-homological pair $(\Lambda,\mcm)$ of finite type?
\end{question}
As a consequence of Theorem \ref{th11}, we have the following result.
 \begin{corollary}\label{cor3}
 Let $(\Lambda,\mcm)$ be an $n$-homological pair with $n\geq2$. Then $(\Lambda,\mcm)$ is of finite 
type if and only if 
\begin{itemize}
\item[(i)]
Condition \ref{c1} is satisfied for $\mcm$.
\item[(ii)]
Any family of monomorphisms between indecomposable objects in $\mcm$ is noetherian.
\end{itemize}
\end{corollary}
Therefore, we can reduce Question \ref{q1} to the following question.
\begin{question}
Let $(\Lambda,\mcm)$ be an $n$-homological pair with $n\geq2$.
Is Condition \ref{c1} satisfied for $\mcm$?
\end{question}
As consequences of Theorems \ref{sumirr} and \ref{dp1}, we have the following results.
\begin{corollary}
Let $(\Lambda,\mcm)$ be an $n$-homological pair of finite type and $f\in\rad_{\mcm}( A,B)$ be a nonzero morphism with $A,B\in\ind\mcm$. Then $f$ is a sum of compositions of irreducible morphisms in $\mcm$ between indecomposable $\Lambda$-modules.
\end{corollary}

\begin{corollary}\label{dp11}
The following statements are equivalent for an $n$-homological pair $(\Lambda,\mcm)$.
\begin{itemize}
\item[(a)]
$(\Lambda,\mcm)$ is of finite type.
\item[(b)]
The depth of the composition $l_S\pi_S$ in $\mcm$ is finite for every simple $\Lambda$-module $S$.
\item[(c)]
For any indecomposable projective $\Lambda$-module $P$, $\rad^\infty_{\mcm}(P,-)=0$.
\item[(d)]
The depth of the composition $\iota_Sr_S$ in $\mcm$ is finite for every simple $\Lambda$-module $S$.
\item[(e)]
For any indecomposable injective $\Lambda$-module $I$, $\rad^\infty_{\mcm}(-,I)=0$.
\end{itemize}
\end{corollary}

Corollary \ref{dp11} is the higher dimensional analog of \cite[Theorem 2.7]{Ch-Li}. In fact, in the classical case, we know that $\modd\Lambda$ is the unique $1$-cluster tilting subcategory of $\modd\Lambda$ and we can consider identity morphisms as left and right approximations. Note that some other similar results were proved for the classical case in \cite{Ch,CMT,L}.

Now, as a consequence of Theorem \ref{dp2}, we have the following corollary.

\begin{corollary}\label{dp21}
An $n$-homological pair $(\Lambda,\mcm)$ is of finite type if and only if the depth of $\theta_S$ in $\mcm$ is finite, for every simple $\Lambda$-module $S$. Moreover, in this case, if $m$ is the maximal depth of the morphisms $\theta_S$ in $\mcm$ with $S$ ranging over all simple $\Lambda$-modules, then $m+1$ is a nilpotency index of $\rad_{\mcm}$.
\end{corollary}

The following example shows that the nilpotency index in Corollary \ref{dp21} is better than the Harada--Sai bound.

\begin{example}
Let $\Lambda$ be the algebra given by the quiver
\begin{center}
\scalebox{.75}{
\begin{tikzpicture}
\node (X3) at (0,0) {$3$};
\node (X2) at (3,0) {$2$};
\node (X1) at (6,0) {$1$};
\node (X5) at (1.5,2) {$5$};
\node (X4) at (4.5,2) {$4$};
\node (X6) at (3,4) {$6$};
\draw [->,thick] (X3) -- (X5)node [midway,above] {$\delta$};
\draw [->,thick] (X5) -- (X2)node [midway,above] {$\gamma$};
\draw [->,thick] (X2) -- (X4)node [midway,above] {$\beta$};
\draw [->,thick] (X4) -- (X1)node [midway,above] {$\,\alpha$};
\draw [->,thick] (X5) -- (X6)node [midway,above] {$\varsigma$};
\draw [->,thick] (X6) -- (X4)node [midway,above] {$\,\varepsilon$};
\draw [-,thick,dotted] (X5) -- (X4);
\draw [-,thick,dotted] (X3) -- (X2);
\draw [-,thick,dotted] (X2) -- (X1);
\end{tikzpicture}}
\end{center}
bound by $\delta\gamma=\beta\alpha=\gamma\beta+\zeta\varepsilon=0$. The algebra $\Lambda$ is $2$-representation finite with the unique $2$-cluster tilting module $M\coloneqq \Lambda\oplus D\Lambda\oplus S_2$ (See \cite[Section B]{J} and \cite[Theorem 1.18]{I4}). Let $\mathcal{M}=\add(M)$. Then $(\Lambda,\mathcal{M})$ is an $2$-homological pair. The Auslander--Reiten quiver of $\Lambda$ is as follows and $M$ is the direct sum of all bold modules.
\begin{center}
\scalebox{.7}{
\begin{tikzpicture}
\node (X3)  at (0,0) {$\textbf{3}$};
\node (X2) at (3,0) {$5$};
\node (X1) at (6,0)  {$\begin{array}{ll}\,5\\26\end{array}$};
\node (X5) at (1.5,2) {$\begin{array}{ll}\textbf{3}\\\textbf{5}\end{array}$};
\node (X4) at (4.5,2) {$\begin{array}{ll}5\\6\end{array}$};
\node (X6) at (3,4) {$\begin{array}{lll}\textbf{3}\\\textbf{5}\\\textbf{6}\end{array}$};
\node (X7) at (4.5,-2) {$\begin{array}{ll}\textbf{5}\\\textbf{2}\end{array}$};
\node (X8) at (7.5,-2) {$6$};
\node (X12) at (10.5,-2) {$\begin{array}{ll}\textbf{2}\\\textbf{4}\end{array}$};
\node (X9) at (7.5,0) {$\begin{array}{lll}\,\textbf{6}\\\textbf{54}\\ \,\textbf{2}\end{array}$};
\node (X11)at (9,0) {$\begin{array}{ll}26\\ \,4\end{array}$};
\node (X14) at (12,0) {$4$};
\node (X17) at (15,0) {$\textbf{1}$};
\node (X10) at (7.5,2) {$\textbf{2}$};
\node (X13) at (10.5,2) {$\begin{array}{ll}6\\4\end{array}$};
\node (X16) at (13.5,2) {$\begin{array}{ll}\textbf{4}\\\textbf{1}\end{array}$};
\node (X15) at (12,4) {$\begin{array}{lll}\textbf{6}\\\textbf{4}\\\textbf{1}\end{array}$};
\draw [->,thick] (X3) -- (X5);
\draw [->,thick] (X5) -- (X2);
\draw [->,thick] (X5) -- (X2);
\draw [->,thick] (X2) -- (X4);
\draw [->,thick] (X4) -- (X1);
\draw [->,thick] (X5) -- (X6);
\draw [->,thick] (X6) -- (X4);
\draw [->,thick] (X2) -- (X7);
\draw [->,thick] (X7) -- (X1);
\draw [->,thick] (X1) -- (X8);
\draw [->,thick] (X1) -- (X9);
\draw [->,thick] (X1) -- (X10);
\draw [->,thick] (X8) -- (X11);
\draw [->,thick] (X9) -- (X11);
\draw [->,thick] (X10) -- (X11);
\draw [->,thick] (X11) -- (X13);
\draw [->,thick] (X11) -- (X12);
\draw [->,thick] (X13) -- (X14);
\draw [->,thick] (X12) -- (X14);
\draw [->,thick] (X14) -- (X16);
\draw [->,thick] (X13) -- (X15);
\draw [->,thick] (X15) -- (X16);
\draw [->,thick] (X16) -- (X17);
\end{tikzpicture}}
\end{center}
The maximal length of the indecomposable modules in $\mathcal{M}$ is equal to $4$. Therefore, the Harada--Sai bound is equal to $2^4-1 = 15$ and so $\rad^{15}_{\mcm}=0$. Consider the morphisms $\theta_{S_i}$.
\begin{flushleft}
$\theta_{S_1}:\begin{array}{lll}\textbf{6}\\\textbf{4}\\\textbf{1}\end{array}\to\begin{array}{ll}\textbf{4}\\\textbf{1}\end{array}\to\textbf{1},$\\
$\theta_{S_2}:\begin{array}{ll}\textbf{5}\\\textbf{2}\end{array}\to\begin{array}{ll}\,5\\26\end{array}\to \textbf{2}\to\begin{array}{ll}26\\ \,4\end{array}\to \begin{array}{ll}\textbf{2}\\\textbf{4}\end{array},$\\
$\theta_{S_3}:\textbf{3}\to\begin{array}{ll}\textbf{3}\\\textbf{5}\end{array}\to\begin{array}{lll}\textbf{3}\\\textbf{5}\\\textbf{6}\end{array},$\\
$\theta_{S_4}:\begin{array}{lll}\,\textbf{6}\\\textbf{54}\\ \,\textbf{2}\end{array}\to\begin{array}{ll}26\\ \,4\end{array}\to\begin{array}{ll}\textbf{2}\\\textbf{4}\end{array}\to4\to \begin{array}{ll}\textbf{4}\\\textbf{1}\end{array},$\\
$\theta_{S_5}:\begin{array}{ll}\textbf{3}\\\textbf{5}\end{array}\to5\to\begin{array}{ll}\textbf{5}\\\textbf{2}\end{array}\to\begin{array}{ll}\,5\\26\end{array}\to\begin{array}{lll}\,\textbf{6}\\\textbf{54}\\ \,\textbf{2}\end{array},$\\
$\theta_{S_6}:\begin{array}{lll}\textbf{3}\\\textbf{5}\\\textbf{6}\end{array}\to \begin{array}{ll}5\\6\end{array}\to \begin{array}{ll}\,5\\26\end{array}\to 6\to\begin{array}{ll}26\\ \,4\end{array}\to\begin{array}{ll}6\\4\end{array}\to\begin{array}{lll}\textbf{6}\\\textbf{4}\\\textbf{1}\end{array}.$
\end{flushleft}
It is obvious that for $i\in\{1,\dots,5\}$, $\dep_{\mcm}(\theta_{S_i})=2$ and $\dep_{\mcm}(\theta_{S_6})=1$. Therefore, the amount of $m$ in Theorem \ref{dp2} is equal to 2, and so $\rad^3_{\mcm}=0$.
\end{example}
\subsection{The contravariantly finite resolving subcategories}
We recall that the subcategory $\mcc$ of $\modd\Lambda$ is called {\em resolving} if $\Lambda\in\mcc$ and it is closed under direct summands, extensions and kernels of epimorphisms \cite{AB}. The contravariantly finite resolving subcategory $\mcc$ is covariantly finite (see \cite[Corollary 2.6]{KrS}), so it is functorially finite. Therefore, if $\mcc$ is a contravariantly finite resolving subcategory of $\modd\Lambda$, then Theorems \ref{main}(b), \ref{th11}, \ref{sumirr} and \ref{dp1}(a) are satisfied for it.
\subsection{The subcategory of finitely generated Gorenstein projective modules}
Let $\Lambda$ be an artin algebra. A \textit{complete projective resolution} is an exact sequence
 \begin{equation}
   (\mathcal{P}^\bullet, d) = \cdots\overset{}{\longrightarrow}P^{-1} \overset{d^{-1}}{\longrightarrow}P^{0} \overset{d^0}{\longrightarrow} P^1\overset{d^1}{\longrightarrow} P^2\overset{}{\longrightarrow}\cdots \notag
    \end{equation}
of $P^i\in\mathrm{Proj}(\Lambda)$ such that for any $P\in\mathrm{Proj}(\Lambda)$, $\Hom_{\Lambda}(-,P)$ is again exact. $M\in\Mod\Lambda$ is called \textit{Gorenstein projective} if there is a complete projective resolution $  (\mathcal{P}^\bullet, d)$ such that $M=\im d^{-1}$. We denote by $\mathrm{GProj}(\Lambda)$ the full subcategory of $\Mod\Lambda$ consisting of Gorenstein
projective modules. Similarly, a finitely generated Gorenstein projective module is defined. We denote by $\mathrm{Gproj}(\Lambda)$ the full subcategory of $\modd\Lambda$ consisting of Gorenstein projective modules. A complete injective resolution, a Gorenstein injective module, a finitely generated Gorenstein injective module and the categories $\mathrm{GInj}(\Lambda)$ and $\mathrm{Ginj}(\Lambda)$ are defined dually \cite{EJ}. The full subcategory $\mathrm{Gproj}(\Lambda)$ of $\modd\Lambda$ is an additive resolving subcategory which is closed under direct summands (See \cite[Theorem 2.4]{H}). Consider $\mathrm{Ext}$-orthogonal subcategories
\begin{align*}
(\mathrm{GProj}(\Lambda))^\perp &= \{X \in\Mod\Lambda | \Ext^n_\Lambda(M,X) = 0,\, \forall n\geq 1,\, \forall M\in \mathrm{GProj}(\Lambda)\},\\
^\perp(\mathrm{GInj}(\Lambda)) &= \{X \in\Mod\Lambda | \Ext^n_\Lambda(X,M) = 0,\, \forall n\geq 1,\, \forall M\in \mathrm{GInj}(\Lambda)\}.
\end{align*}
$\Lambda$ is called virtually Gorenstein, if
$(\mathrm{GProj}(\Lambda))^\perp =\, ^\perp(\mathrm{GInj}(\Lambda))$.
In this case, $\mathrm{Gproj}(\Lambda)$ is a functorially finite subcategory of $\modd\Lambda$ \cite{Be}.
Therefore, if $\Lambda$ is virtually Gorenstein, then Theorems \ref{main}(b), \ref{th11}, \ref{sumirr} and \ref{dp1}(a) are satisfied for $\mathrm{Gproj}(\Lambda)$.
\section*{acknowledgements}
The research of the second author was in part supported by a grant from IPM (No. 1401170217). The work of the second author is based upon research funded by Iran National Science Foundation (INSF) under project No. 4001480.

\end{document}